\documentclass[b5paper,10pt]{article}
\usepackage{amsmath,amsfonts,amssymb}
\usepackage{amsthm}
\usepackage{bm}
\usepackage[utf8]{inputenc}
\usepackage{url}
\usepackage{tikz}
\usepackage{hyperref}
\hypersetup{%
colorlinks=true,
linkcolor=black,
citecolor=black,
pdfauthor={Martin Sprengel, Gabriele Ciaramella},
pdfsubject={},
pdfkeywords={}
}

\newcommand{\R}{ \mathbb{R} }

\newcommand{\C}{ \mathbb{C} }
\newcommand{\dd}{\mathrm{d}}
\newcommand{\Span}{\text{span}}
\newcommand{\abl}[2]{\frac{\dd #1}{\dd #2}}
\newcommand{\pa}{\partial}
\newcommand{\pabl}[2]{\frac{\pa #1}{\pa #2}}
\newcommand{\dualP}[2]{\langle #1,\,#2 \rangle}
\newcommand{\scalar}[2]{\left(#1,\,#2\right)_{L^2}}

\newcommand{\scalarHOne}[2]{\left(#1,\,#2\right)_{H^1}}

\newcommand{\scalarC}[2]{\left(#1,\,#2\right)_{\C}}
\renewcommand{\Im}{\operatorname{Im}}
\renewcommand{\Re}{\operatorname{Re}}
\newcommand{\co}[1]{\overline{#1}}

\newtheorem{theorem}{Theorem}
\newtheorem{lemma}{Lemma}
\newtheorem{assumption}{Assumption}

\newtheorem{remark}{Remark}

\setlength{\textwidth}{5in}

\begin{document}
\title{A theoretical investigation of time-dependent Kohn-Sham equations\thanks{Supported in part by the Deutsche Forschungsgemeinschaft (DFG) project ``Controllability and Optimal Control of Interacting Quantum Dynamical Systems'' (COCIQS).}}
\author{
M.~Sprengel\thanks{Institut f\"ur Mathematik, Universit\"at W\"urzburg, Emil-Fischer-Strasse 30,
97074 W\"urzburg, Germany ({\tt martin.sprengel@mathematik.uni-wuerzburg.de}).}
         \and
G. Ciaramella\thanks{
Section de mathématiques, 
Université de Genève, 2-4 rue du Lièvre
1211 Genève 4, Switzerland ({\tt gabriele.ciaramella@unige.ch}).}
         \and 
A.~Borz{\`i}\thanks{Institut f\"ur Mathematik, Universit\"at W\"urzburg, Emil-Fischer-Strasse 30,
97074 W\"urzburg, Germany ({\tt alfio.borzi@mathematik.uni-wuerzburg.de}).}
       }
\date{}
\maketitle

\begin{abstract}
	In this work, the existence, uniqueness and regularity of solutions to the time-dependent Kohn-Sham equations are investigated. The Kohn-Sham equations are a system of nonlinear coupled Schrö\-din\-ger equations that describe multi-particle quantum systems in the framework of the time dependent density functional theory.
	In view of applications with control problems, the presence of a control function and of an inhomogeneity are also taken into account.
\end{abstract}

\section{Introduction}
The time-dependent density functional theory (TDDFT) was introduced to model multi-particle quantum systems avoiding the solution of the full Schrödinger equation (SE) in multi-dimensions \cite{KohnSham1965, ParrYang1989,  RuggenthalerPenzVanLeeuwen2015, RungeGross1984}.

The central concept of TDDFT is to describe the configuration of a $N$-particle quantum system using the density function $\rho$ that depends on the $n$-dimensional physical space coordinates and time. This is in contrast to the wave function representation of the full Schrödinger problem where $n\cdot N$ space coordinates are involved.

Specifically, in the TDDFT framework a system of $N$ nonlinear SEs is considered that governs the evolution of $N$ single-particle wave functions $\Psi=(\psi_1,\dotsc, \psi_N)$, $\psi_i=\psi_i(x,t)$, $x\in \R^n$, $t\in \R$. These SEs are coupled through a potential that depends on the density $\rho(x,t)=\sum_{i=1}^N |\psi_i(x,t)|^2$.
This time-dependent Kohn-Sham (TDKS) system is given by
\begin{equation}\label{eq:KS}
i \partial_t \Psi(x,t)
= I_N \otimes \bigl[- \nabla^2 +  V_{ext}(x,t;u) + V(x,t;\Psi) \bigr] \Psi(x,t), \quad \Psi(x,0)=\Psi_0(x)
\end{equation}
where $\nabla^2$ is the Laplacian, $V_{ext}$ is an external potential that includes the confining potential, e.g., the surrounding walls or the Coulomb potential of the nuclei of a molecule, and, possibly, a control potential. $V$ denotes the coupling $KS$ potential. See  \cite{RuggenthalerPenzVanLeeuwen2015} for a review on this model. 

The purpose of our work is to theoretically investigate \eqref{eq:KS}, with a given control function $u\in H^1(0,T)$, and an adjoint version of \eqref{eq:KS} that appears in the following optimal control problem
\begin{equation}\label{Intro:ControlProblem}
\begin{split}
	\min_{(\Psi,u)\in (W,H^1(0,T))}	J_1(\Psi)+J_2(\Psi(T)) + \nu\|u\|_{H^1(0,T)}^2 \quad \text{ s.t. } \Psi \text{ solves } \eqref{eq:KS},
\end{split}\end{equation}
where $\nu>0$ is a weight parameter and $J_1$ depends on the whole solution $\Psi$, while $J_2$ depends on the wave function at the final time $\Psi(T)$ only. The functionals $J_1, J_2$ are assumed to be lower semicontinuous and Fréchet differentiable with respect to $\Psi$.

To characterize the solutions to \eqref{Intro:ControlProblem} using the adjoint method \cite{Borzi2012}, the following adjoint equation is considered.
\begin{equation}\label{eq:KSadjoint}
\begin{split}
	i\pabl{\Psi}{t}&=I_N \otimes\left( -\nabla^2 +V_{ext}(x,t,u)
	+V(\Lambda)\right)\Psi\\
	 &+ I_N \otimes \left(V_H(2\Re\scalarC{\Psi}{\Lambda})+2 \pabl{V_{xc}}{\rho}(\Lambda)\Re\scalarC{\Psi}{\Lambda}+D_\psi J_1(\Lambda) \right) \Lambda,\\
	 \Psi(T)&=-D_\Psi J_2 (\Lambda(T)),
\end{split}
\end{equation}
where we again denote by $\Psi$ the adjoint variable while  the solution to \eqref{eq:KS} is denoted with $\Lambda$.
We remark that \eqref{eq:KSadjoint} has a similar structure as \eqref{eq:KS} with an additional inhomogeneity resulting from the Fréchet derivative $D_\psi J_1(\Lambda) \Lambda$ of $J_1$ with respect to the wave function, as well as additional terms resulting from the linearization of the Kohn-Sham potential. On the other hand, $V$ now depends on $\Lambda$ and is no longer a function of the unknown variables.
The derivative of $J_2$ gives a terminal condition for \eqref{eq:KSadjoint} that evolves backwards in time.

In this paper, we theoretically analyse \eqref{eq:KS} and \eqref{eq:KSadjoint} as two particular instances of a generalized TDKS equation, proving existence and uniqueness of solutions. At the best of our knowledge, this problem is only addressed in \cite{Jerome2015} for \eqref{eq:KS}. In this reference, the Author proves existence and uniqueness of solutions assuming that the Hamiltonian is  continuously differentiable in time. We improve these results in such a way that this theory can accommodate TDKS optimal control problems. In particular, existence and uniqueness of solutions with similar regularity as in \cite{Jerome2015} are proved also in the case when the external potential is only $H^1$ and not $C^1$. These results are achieved in the Galerkin framework.
We remark that by this approach, we address the TDKS equation \eqref{eq:KS} and its adjoint \eqref{eq:KSadjoint} in an unique framework. 
Notice that the adjoint problem has a different structure that can make it difficult the use of semi-group theory. 

This paper is organized as follows. In Section \ref{sec:model}, we discuss the KS potential $V$ and the external potential $V_{ext}$. Further, we formulate our evolution problem in a weak sense that embodies both \eqref{eq:KS} and \eqref{eq:KSadjoint}. Also in this section, we discuss the initial and boundary conditions, and provide specific assumptions on the potentials and the spatial domain $\Omega$ and the time interval $(0,T)$ where the KS problem is considered.
In Section \ref{sec:PreliminaryEstimates}, we investigate some properties of the KS potential and discuss continuity properties of the bilinear form resulting from the weak formulation. In Section \ref{sec:Galerkin}, we use the Galerkin framework  to obtain a finite dimensional approximation of our weak problem. In Section \ref{sec:EnergyEstimates}, we present energy estimates for the finite dimensional representation and their extension to the infinite dimensional case.
In Section \ref{sec:ExistenceSolution} and \ref{sec:Uniqueness}, we prove existence and uniqueness of solutions to our weak problem. First, we prove convergence of the Galerkin approximation to the infinite dimensional solution and use our results on the Lipschitz properties of the potential to prove uniqueness of this solution.
In Section \ref{sec:ImprovedRegularity}, assuming\linebreak $\Psi_0\in H^1_0(\Omega)$, we prove that the solution of our problem has higher regularity. The Sections \ref{sec:ExistenceSolution}, \ref{sec:Uniqueness}, and \ref{sec:ImprovedRegularity} present our main theoretical results. 
A section of conclusion completes this work. 

\section{The model description}
\label{sec:model}
In this section, we introduce the weak formulation of our evolution problem, define the potentials and discuss our assumptions.
To introduce the weak formulation of the evolution problem, we define the following function spaces. We use $L^2(\Omega;\C^N)$ where $\scalar{ \cdot }{ \cdot }$ is the scalar product  defined as follows
\begin{equation*}
\scalar{ \Psi }{ \Phi } := \int_{\Omega} \scalarC{ \Psi }{ \Phi } dx  ,
\end{equation*}
and $\| \cdot \|_{L^2}$ denotes the corresponding norm. Further, we denote by $\scalarC{ \cdot }{ \cdot }$ the scalar product for $\C^N$ and $| \cdot |$ is the corresponding norm. The scalar product of the Sobolev space $H^1(\Omega;\C^N)$ is given by
\begin{equation*}
\scalarHOne{ \Psi }{ \Phi } := \scalar{ \Psi }{ \Phi }
+\scalar{ \nabla \Psi }{ \nabla \Phi }  ,
\end{equation*}
and $\| \cdot \|_{H^1}$ denotes the corresponding norm.
Furthermore, the following spaces of functions of time and space with function values in $\C^N$ are used.\linebreak
$Y := L^2(0,T;L^2(\Omega;\C^N))$, $X:=L^2(0,T;H_0^1(\Omega; \C^N))$ with corresponding norms $\|u\|_Y^2=\int_0^T\|u(t)\|_{L^2}^2 \dd t$ and $\|u\|_X^2=\int_0^T\|u(t)\|_{H^1}^2 \dd t$ and its dual\linebreak $X^*=L^2(0,T; H^{-1}(\Omega; \C^N))$ and the space of solutions
$W := \{u \in X \text{ such that }\linebreak  u'\in  X^*\}$.

We prove the existence of a solution of the controlled Kohn-Sham model \eqref{eq:KS} and at the same time of \eqref{eq:KSadjoint} on a bounded domain $\Omega \subset \mathbb{R}^n$, $n = 3$, with homogeneous Dirichlet boundary conditions.
For this purpose, we denote by $\Psi \in X$ 
the vector of the wave functions corresponding to $N$ particles
\begin{equation}
\Psi := ( \: \psi_1, \dotsc, \psi_N \: )  ,
\end{equation}
and assume that $\psi_j(x,t) = 0$ for $x\in\partial \Omega$ and  consider the initial condition\linebreak $\psi_j(x,0) = \psi_{0,j}(x)$ with $\psi_{0,j} \in L^2(\Omega;\C)$.
Moreover, to include a possible inhomogeneity of the PDE, we consider the function $F \in Y$ defined as follows
\begin{equation}
F := ( \: f_1, \dotsc , f_N \: )  ,
\end{equation}
where $f_j \in L^2(0,T;L^2(\Omega;\C))$.

The wave function $\Psi$ gives rise to the density $\rho$ defined as follows
\begin{equation}
\rho(x,t) := \sum_j |\psi_j(x,t)|^2  ,
\end{equation}
which is used to characterize the nonlinear potential $V(x,t;\Psi)$. The dependence of $V$ on $\Psi$ is always through the density $\rho$, so we may also write  $V(x,t;\rho)$. In the local density approach (LDA) framework, $V$ is given by the sum of the Hartree, the exchange, and the correlation potentials. We have
\begin{equation}
\label{def:potential}
\begin{split}
V(x,t;\Psi) &= V_H+V_{xc}=V_H+V_x+V_c,\\
	V_H&=\int_\Omega \frac{\rho(y,t)}{|x-y|} \dd y, \quad V_x=V_x(\rho(x,t)), \quad V_c= V_c(\rho(x,t)).
\end{split}\end{equation}
$V_x$ is often derived from an approximation called the homogeneous electron gas \cite{ParrYang1989} and then given by $V_x= c \rho(x,t)^\beta$, where $c$ is a negative constant and \linebreak $0<\beta<1$ depends on the dimension $n$. For the correlation potential $V_c$ only numerical approximation exists. In the course of the years, physicists and quantum chemists have developed a collection of different $V_c$  functions. Similar to Jerome \cite{Jerome2015}, who uses a Lipschitz assumption on $V_x+V_c$, we make some general assumptions on the structure of the potentials rather than using an explicit form for one of the approximation used in applications.

The external potential is given by 
\begin{equation}
V_{ext}(x,t;u)=V_0(x)+V_u(x) u(t),
\end{equation}
where $V_0$ models a confinement potential, e.g., a harmonic trap in a solid state system or a molecule. The control potential $V_u(x) u(t)$ may represent a gate voltage applied to the solid state system or a laser pulse on the molecule.

We consider problems \eqref{eq:KS} and \eqref{eq:KSadjoint} in a unified framework by introducing a parameter $\alpha$ that indicates the case \eqref{eq:KS} by $\alpha=1$ and \eqref{eq:KSadjoint} by $\alpha=0$. The inhomogeneity $F$ can be zero as in \eqref{eq:KS} or given as in \eqref{eq:KSadjoint}. The equations are studied in the following weak form: 

Find a wave function $\Psi\in X$ with $\Psi'\in X^*$, such that
\begin{equation}\label{eq:KSweak}
\begin{split}
&i \scalar{ \partial_t \Psi(t)}{ \Phi }
= B(\Psi(t),\Phi;u(t)) + \alpha\scalar{ V(\Psi(t)) \Psi(t)}{ \Phi } + \scalar{ F(t) }{ \Phi } \\
&\text{a.e. in } (0,T) \text{ and } \forall \Phi \in H_0^1(\Omega; \C^N)  ,\\
&\Psi(0)=\Psi_0 \in L^2(\Omega; \C^N),
\end{split}
\end{equation}
where the bilinear form $B(\Psi,\Phi;u)$ is defined as follows
\begin{equation}\begin{split}
\label{eq:bilinearForm}
B(\Psi,\Phi;u) := &\scalar{ \nabla \Psi }{ \nabla \Phi }
+ \scalar{ V_{ext}(\cdot,\cdot;u) \Psi }{ \Phi }\\
&+(1-\alpha)\scalar{V(\cdot, \cdot, \Lambda)\Psi}{\Phi}+(1-\alpha) D(\Psi, \Phi) .
\end{split}\end{equation}
The additional terms of the adjoint equation are given by
\begin{equation}\label{eq:Dadjoint}
\begin{split}
	D(\Psi, \Phi)&=D_H(\Psi, \Phi)+D_{xc}(\Psi, \Phi),
\end{split}
\end{equation}
where
\begin{align*}
		D_H(\Psi, \Phi)&=\scalar{V_H(2\Re\scalarC{\Psi}{\Lambda})\Lambda}{\Phi},\\
		D_{xc}(\Psi, \Phi)&=\scalar{2 \pabl{V_{xc}}{\rho}(\Lambda)\Re\scalarC{\Psi}{\Lambda} \Lambda}{\Phi}.
\end{align*}

We remark that when studying the adjoint equation, the adjoint variable is also denoted with $\Psi$, and $\Lambda=(\lambda_1,\dotsc,\lambda_N)$ corresponds to the solution of the forward equation \eqref{eq:KS}. As we later prove in Theorem \ref{thm:ImprovedRegularity}, the solution of the forward equation $\Lambda$ is in $L^2((0,T); H^2(\Omega))$ and the embedding $H^2(\Omega)\hookrightarrow C(\bar{\Omega})$ guarantees that $\Lambda(t)$ is bounded a.e. in $(0,T)$, see, e.g., \cite[p. 332]{Ciarlet2013}.
Here,  $C(\bar{\Omega})$ is the space of continuous functions with the norm $\|f\|_{C}=\max_{k=1,\dotsc,N}\sup_{x\in \bar{\Omega}}|f_k(x)|$, 

In a quantum control setting, the inhomogeneity $F$ is zero in the forward equation and contains the derivative of $J_1$ with respect to the wave function in the adjoint equation. However, for generality we allow a nonzero $F$ when studying \eqref{eq:KS}. As in the argument above, $\Lambda$ and $J_1(\Lambda)$ are continuous functions of $x$ and hence in $L^2(\Omega)$.
To incorporate a final condition $\Psi(T)$ instead of an initial condition $\Psi(0)$, we substitute $t\mapsto T-t$.

Now, we want to summarize our assumptions that we make throughout our paper.
\begin{assumption}\label{assumptions}
We consider the following.
\begin{enumerate}
	\item A bounded domain $\Omega \subset \R^n$ with $n=3$ and a Lipschitz boundary; and, for the improved regularity in Theorem \ref{thm:ImprovedRegularity} and \ref{thm:ImprovedRegularity2}, $\pa \Omega\in C^2$.
	\item \label{assumptionVcBounded} The correlation potential  $V_c$ is uniformly bounded in the sense that\linebreak $|V_c(\Psi(x,t))|\leq K,$ $\forall x\in\Omega$, $t\in [0,T]$, $\Psi\in Y$; this is the case, e.g. for the Wigner potential \cite{Wigner1938}.
	\item \label{assumptionVxLipschitz} The exchange potential $V_x$ is Lipschitz continuous in the sense\linebreak  $\|V_x(\Psi)\Psi-V_x(\Upsilon)\Upsilon\|_{L^2}\leq L \|\Psi-\Upsilon\|_{L^2}$, for $\Psi$ and $\Upsilon$ being weak solutions of \eqref{eq:KSweak} and locally Lipschitz continuous for $\Psi\in L^2(\Omega)$, i.e. $\|V_x(\Psi)\Psi\|_{L^2}\leq L\|\Psi\|_{L^2}$, where $L$ might depend on $\|\Psi\|_{L^2}$; 
	cf. the similar assumption in \cite{Jerome2015}. This assumption will be motivated further in Remark \ref{remark:aposteriori}.
	\item \label{assumptionVxcDiffable} $V_x$ and $V_c$ are weakly differentiable as functions of $\rho$ and $\pabl{V_{xc}}{\rho}(\rho) \rho$ is bounded for finite values of $\rho$. For $V_x=c\rho^\beta$, this can be shown directly:
	\begin{align}
		\pabl{V_x(\rho)}{\rho}\rho=c\beta \rho^{\beta-1} \rho=c\beta\rho^\beta=\beta V_x(\rho).
	\end{align}
	\item The confining potential and the spacial dependence of the control potential are bounded, i.e. $V_0, V_u \in L^\infty(\Omega)$, where $\| f \|_{L^\infty}:=\max_{k=1,\dotsc,N}\operatorname*{ess\,sup}_{x\in \Omega} |f_k(x)|$ is the norm for $L^\infty(\Omega;\C^N)$; as we consider a finite domain, this is equivalent to excluding divergent external potentials.
	\item The control is $u \in H^1(0,T)$. This a classical assumption in optimal control, see, e.g. \cite{VonWinckelBorzi2008}.
	\item $\Psi_0\in L^2$ for existence and uniqueness of the forward and adjoint equations and $\Psi_0\in H^1$ for the improved regularity. For the adjoint equation, we assume that the solution of the forward problem  $\Lambda$ is in $L^2(0,T; H^2(\Omega; \C^N))$, this can be shown by applying Theorem \ref{thm:ImprovedRegularity} to the forward problem.
\end{enumerate}	
\end{assumption}

\section{Preliminary estimates}\label{sec:PreliminaryEstimates}
In this section, we study continuity properties of the KS potential and of the bilinear form.
We begin with a general result on the Coulomb potential $w(x)=\frac{1}{|x|}$. Then we investigate the continuity of the Hartree potential that is defined as the convolution of $w$ with the density $\rho$, and of the KS potential in more detail.
Finally, we prove some estimates for the bilinear forms $B$ and $D$.

\begin{lemma}\label{lem:CoulombInLp}
Given a bounded domain $\Omega \subset \R^n$ containing the origin, it holds that $w\in L^p(\Omega)$ if and only if $n>p$.
\end{lemma}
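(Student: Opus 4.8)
The claim is that for $w(x)=\frac{1}{|x|}$ on a bounded domain $\Omega\subset\R^n$ containing the origin, $w\in L^p(\Omega)$ if and only if $n>p$. The singularity of $w$ occurs only at the origin; away from the origin $w$ is bounded and continuous, so the integrability of $|w|^p$ over $\Omega$ is governed entirely by the behaviour near $x=0$. The plan is therefore to localize to a small ball around the origin and reduce everything to a radial integral computation in spherical coordinates.

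First I would split $\Omega$ into a ball $B_\delta=B(0,\delta)\subset\Omega$ (possible since $\Omega$ is open and contains the origin) and the complement $\Omega\setminus B_\delta$. On $\Omega\setminus B_\delta$ we have $|w(x)|\le \delta^{-1}$, so $\int_{\Omega\setminus B_\delta}|w|^p\,\dd x \le \delta^{-p}\,|\Omega|<\infty$ regardless of $p$ and $n$; this piece never causes a problem. Hence $w\in L^p(\Omega)$ if and only if $\int_{B_\delta}|w|^p\,\dd x<\infty$.

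Next I would compute the model integral over the ball in spherical coordinates. Writing $\dd x = r^{n-1}\,\dd r\,\dd\sigma$ where $\dd\sigma$ is the surface measure on the unit sphere $S^{n-1}$ (whose total mass is a finite constant $\omega_{n-1}$), we get
\begin{equation*}
\int_{B_\delta}\frac{1}{|x|^p}\,\dd x
=\omega_{n-1}\int_0^\delta r^{-p}\,r^{n-1}\,\dd r
=\omega_{n-1}\int_0^\delta r^{n-1-p}\,\dd r.
\end{equation*}
The remaining one-dimensional integral $\int_0^\delta r^{n-1-p}\,\dd r$ converges at $r=0$ precisely when the exponent satisfies $n-1-p>-1$, i.e. $n>p$, in which case it equals $\frac{\delta^{n-p}}{n-p}$; when $n-1-p\le -1$, i.e. $n\le p$, the integral diverges. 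Combining this with the harmless outer piece gives the equivalence in both directions.

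The main obstacle here is not conceptual but a matter of care: ensuring that the localization is valid (that a ball $B_\delta\subset\Omega$ genuinely exists, using openness of $\Omega$ and $0\in\Omega$), and correctly identifying the convergence threshold of the radial integral, since the factor $r^{n-1}$ from the Jacobian is exactly what shifts the critical exponent from the naive $p<1$ to the correct $p<n$. One should also state that the "only if" direction follows by the same computation — divergence of the inner integral forces $w\notin L^p(\Omega)$ — so that the biconditional is established rather than just one implication.
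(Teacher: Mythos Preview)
Your proposal is correct and follows essentially the same approach as the paper: both localize to a ball $B_\delta(0)\subset\Omega$, note that $w$ is bounded on the complement, and reduce the question to the convergence of the radial integral $\int_0^\delta r^{n-1-p}\,\dd r$ in spherical coordinates. If anything, your version is slightly more explicit about why a ball can be chosen inside $\Omega$ and about the ``only if'' direction.
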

\begin{proof}
	By $B_R(0):=\{x\in \R^m: |x|<R\}$, we denote the open ball of radius $R\in\R^+$ around the origin.
	Consider now a ball $B_R(0) \subset\Omega$. Then by using spherical coordinates and the fact that $|x|$ does not depend on the orientation of $x$, we get (see e.g. \cite{Evans2010})
	\begin{align*}
		&\int_{B_R(0)} \frac{1}{|x|^p} \dd x 
		=\frac{n\pi^{n/2}}{\Gamma(\frac{n}{2}+1)}\int_0^R \frac{1}{r^p} r^{n-1} \dd r\\
		&=\frac{n\pi^{n/2}}{\Gamma(\frac{n}{2}+1)}\left[\frac{r^{n-p}}{n-p} \right]_0^R
		=\begin{cases}
		  	\frac{n\pi^{n/2}}{\Gamma(\frac{n}{2}+1)} \frac{R^{n-p}}{n-p} <\infty 	& n>p\\
		  	\infty							& n\leq p,
		  \end{cases}
	\end{align*}
	where $\Gamma$ is the $\Gamma$-function.
	Outside this ball, $\frac{1}{|x|}$ is globally bounded.
\end{proof}

\begin{lemma}\label{CancesLemma}
For $\Phi, \Psi\in H^1(\Omega; \C^N)$ there exists a positive constant $C_u$ such that
\begin{equation}
	\|V_H(\Phi)\Phi-V_H(\Psi)\Psi\|_{L^2} \leq C_u \left( \|\Phi\|_{H^1}^2+\|\Psi\|_{H^1}^2\right) \|\Phi-\Psi\|_{L^2}.
\end{equation}

\end{lemma}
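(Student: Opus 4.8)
The plan is to exploit the multilinear structure of the Hartree nonlinearity, which is cubic in the wave function --- quadratic through the density $\rho_\Phi=\sum_j|\phi_j|^2$ and linear through the trailing factor --- and to reduce the estimate to H\"older's inequality, the Coulomb estimate of Lemma~\ref{lem:CoulombInLp}, and the Sobolev embedding $H^1(\Omega)\hookrightarrow L^6(\Omega)$, valid for $n=3$. First I would telescope the difference as
\[
V_H(\Phi)\Phi-V_H(\Psi)\Psi = V_H(\Phi)(\Phi-\Psi) + \bigl(V_H(\Phi)-V_H(\Psi)\bigr)\Psi,
\]
so that in each summand exactly one factor carries the difference $\Phi-\Psi$, and then estimate the two pieces separately in $\|\cdot\|_{L^2}$.

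For the first summand I would bound $\|V_H(\Phi)(\Phi-\Psi)\|_{L^2}\le \|V_H(\Phi)\|_{L^\infty}\,\|\Phi-\Psi\|_{L^2}$ and control the potential pointwise by H\"older, $|V_H(\Phi)(x)|\le \|w(x-\cdot)\|_{L^{3/2}}\|\rho_\Phi\|_{L^3}$. Here $\|w(x-\cdot)\|_{L^{3/2}(\Omega)}$ is finite and bounded uniformly in $x$ by Lemma~\ref{lem:CoulombInLp} (since $n=3>3/2$ and $\Omega$ is bounded), while $\|\rho_\Phi\|_{L^3}=\|\Phi\|_{L^6}^2\le C\|\Phi\|_{H^1}^2$ by the Sobolev embedding. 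This already yields the admissible bound $C\|\Phi\|_{H^1}^2\|\Phi-\Psi\|_{L^2}$ for the first term.

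The delicate summand is the second one, where the difference sits inside the (linear) potential $V_H(\Phi)-V_H(\Psi)=V_H[\rho_\Phi-\rho_\Psi]$. The naive route --- bounding this difference in $L^\infty$ --- fails, because it forces the Coulomb kernel into $L^3(\Omega)$, precisely the borderline case excluded by Lemma~\ref{lem:CoulombInLp}. To stay strictly subcritical I would instead split by H\"older as $\|(V_H(\Phi)-V_H(\Psi))\Psi\|_{L^2}\le \|V_H(\Phi)-V_H(\Psi)\|_{L^3}\|\Psi\|_{L^6}$ (using $\tfrac13+\tfrac16=\tfrac12$), estimate the potential difference by Young's inequality with the kernel truncated to a ball of radius $\mathrm{diam}(\Omega)$, $\|V_H[\rho_\Phi-\rho_\Psi]\|_{L^3}\le C\|w\|_{L^{3/2}}\|\rho_\Phi-\rho_\Psi\|_{L^{3/2}}$, and finally bound the density difference via the pointwise estimate $|\rho_\Phi-\rho_\Psi|\le |\Phi-\Psi|\,(|\Phi|+|\Psi|)$ together with H\"older ($\tfrac12+\tfrac16=\tfrac23$):
\[
\|\rho_\Phi-\rho_\Psi\|_{L^{3/2}}\le \|\Phi-\Psi\|_{L^2}\,\bigl(\|\Phi\|_{L^6}+\|\Psi\|_{L^6}\bigr)\le C\|\Phi-\Psi\|_{L^2}\bigl(\|\Phi\|_{H^1}+\|\Psi\|_{H^1}\bigr).
\]

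Collecting these estimates and using $\|\Psi\|_{L^6}\le C\|\Psi\|_{H^1}$ gives for the second term the bound $C(\|\Phi\|_{H^1}+\|\Psi\|_{H^1})\|\Psi\|_{H^1}\|\Phi-\Psi\|_{L^2}$, which after the elementary inequality $ab\le \tfrac12(a^2+b^2)$ is absorbed into $C(\|\Phi\|_{H^1}^2+\|\Psi\|_{H^1}^2)\|\Phi-\Psi\|_{L^2}$. Adding the two contributions closes the proof. The one genuine subtlety, and the step I expect to be the main obstacle, is the exponent bookkeeping in the second term: one must balance the $L^3$--$L^6$ duality, Young's inequality with the $L^{3/2}$ Coulomb kernel, and the $L^2$--$L^6$ H\"older split for the density so that the Coulomb kernel is never asked to lie in $L^3(\Omega)$ --- which Lemma~\ref{lem:CoulombInLp} forbids --- while the single factor $\|\Phi-\Psi\|_{L^2}$ is preserved throughout.
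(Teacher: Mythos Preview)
Your proof is correct and complete; the exponent bookkeeping is right (in particular $\tfrac{2}{3}+\tfrac{2}{3}=1+\tfrac{1}{3}$ for Young and $\tfrac{1}{2}+\tfrac{1}{6}=\tfrac{2}{3}$ for the density split), and the uniform-in-$x$ bound on $\|w(x-\cdot)\|_{L^{3/2}(\Omega)}$ via $x-\Omega\subset B_{\mathrm{diam}(\Omega)}(0)$ is exactly what makes the $L^\infty$ estimate on $V_H(\Phi)$ go through.

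The route, however, differs from the paper's. Both arguments start from the same telescoping $V_H(\Phi)\Phi-V_H(\Psi)\Psi=V_H(\Phi)(\Phi-\Psi)+V_H[\rho_\Phi-\rho_\Psi]\Psi$, but the analytic engine is different. The paper invokes a pointwise Hardy-type bound from Canc\`es--Le~Bris, namely $\bigl|\int_\Omega \tfrac{f(y)\overline{g(y)}}{|x-y|}\,dy\bigr|\le \|f\|_{L^2}\|\nabla g\|_{L^2}$, which gives an $L^\infty$ estimate on the bilinear potential directly in terms of one $L^2$ norm and one $\dot H^1$ seminorm; this is then applied componentwise with various choices of $f,g,\Upsilon$ to handle both summands. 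Your argument replaces this black box by the chain H\"older $+$ $w\in L^{3/2}$ (Lemma~\ref{lem:CoulombInLp}) $+$ Sobolev $H^1\hookrightarrow L^6$ $+$ Young's convolution inequality. What you gain is self-containment: nothing outside the paper's own Lemma~\ref{lem:CoulombInLp} and textbook inequalities is used. What the paper's route gains is a slightly sharper structural statement (one $L^2$ and one gradient rather than two $L^6$ norms) and a cleaner $N$-dependence, at the cost of importing the Hardy inequality from an external reference.
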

\begin{proof}
	We adapt Lemma 5 in \cite{CancesLeBris1999} to our case of vector valued functions. To this end, we define $g^k(\Phi,\Psi)(x):=\int_\Omega \frac{\phi_k(y)\co{\psi_k(y)}}{|x-y|}\dd y$, $\tilde{g}(\Phi,\Psi)(x):=\sum_{k=1}^N g^k(\Phi,\Psi)(x)$. Then Lemma 3 in \cite{CancesLeBris1999} gives
	\begin{align*}
		|\tilde{g}(\Phi_1,\Phi_2)(x)|\leq \sum_{k=1}^N |g^k(\Phi_1,\Phi_2)(x)|\leq \sum_{k=1}^N \|\phi_{1,k}\|_{L^2}\|\nabla \phi_{2,k}\|_{L^2}.
	\end{align*}
	Using this fact and setting $\Upsilon=(v_1,\dotsc,v_N)$, we have
	\begin{align*}
		\|\tilde{g}(\Phi_1,\Phi_2)\Upsilon\|_{L^2}^2&=\sum_{k=1}^N\|\tilde{g}\upsilon_k\|_{L^2}^2=\sum_{k=1}^N \int_\Omega |\tilde{g}(x)\upsilon_k(x)|^2 \dd x\\
		&\leq\sum_{k=1}^N \left(\sum_{l=1}^N \|\phi_{1,l}\|_{L^2} \|\nabla \phi_{2,l}\|_{L^2}\right)^2 \int_\Omega |\upsilon_k(x)|^2\dd x\\
		&=\left(\sum_{l=1}^N \|\phi_{1,l}\|_{L^2} \|\nabla \phi_{2,l}\|_{L^2}\right)^2 \|\Upsilon\|_{L^2}^2\\
		&\leq N\sum_{l=1}^N \left(\|\phi_{1,l}\|_{L^2} \|\nabla \phi_{2,l}\|_{L^2}\right)^2 \|\Upsilon\|_{L^2}^2\\
		&\leq N\sum_{l=1}^N \|\phi_{1,l}\|_{L^2}^2 \sum_{j=1}^N\|\nabla \phi_{2,j}\|_{L^2}^2 \|\Upsilon\|_{L^2}^2\\
		&=N\|\Phi_1\|_{L^2}^2 \|\nabla \Phi_2\|_{L^2}^2\|\Upsilon\|_{L^2}^2.
	\end{align*}
	Now, we apply this to $\Phi_1=\Phi_2=\Phi$, $\Upsilon=\Phi-\Psi$ to obtain $\|(|\Phi|^2\star w)(\Phi-\Psi)\|_{L^2}\leq \sqrt{N}\|\Phi\|_{L^2}\|\nabla\Phi\|_{L^2}\|\Phi-\Psi\|_{L^2}$.
	Furthermore, using the decomposition
	\begin{align*}
		\sum_{k=1}^N|\phi_k|^2-\sum_{k=1}^N|\psi_k|^2=\sum_{k=1}^N\left(\phi_k \co{(\phi_k-\psi_k)}+\co{\psi_k}(\phi_k-\psi_k) \right),
	\end{align*}
	and $\Upsilon=\Psi$, $\Phi_1=\Phi-\Psi$, $\Phi_2=\Phi$ for the first term, and $\Upsilon=\Psi$, $\Phi_1=\Phi-\Psi$,$\Phi_2=\Psi$ for the second term,
	we find
	\begin{align*}
		\|((|\Phi|^2-|\Psi|^2)\star w) \Psi\|_{L^2}\leq \sqrt{N}\|\Psi\|_{L^2}(\|\nabla \Phi\|_{L^2}+\|\nabla \Psi\|_{L^2})\|\Phi-\Psi\|_{L^2}.
	\end{align*}
	With this, the proof of Lemma 5 in \cite{CancesLeBris1999} extends to the vector case.
\end{proof}

\begin{lemma}\label{lem:VContinuousFunction}
	The nonlinear KS potential $V$ is a continuous function from $L^2(\Omega)$ to $L^2(\Omega)$.
\end{lemma}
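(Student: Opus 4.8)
The plan is to prove continuity of the map $\Psi \mapsto V(\Psi)$ from $L^2(\Omega;\C^N)$ to $L^2(\Omega)$ by treating the three constituent potentials separately, since $V = V_H + V_x + V_c$ by \eqref{def:potential}. Because all dependence on $\Psi$ is through the density $\rho = \sum_k |\psi_k|^2$, the first observation I would record is that the density map $\Psi \mapsto \rho$ is continuous from $L^2$ into $L^1(\Omega)$: indeed $\|\rho_\Phi - \rho_\Psi\|_{L^1} = \int_\Omega \bigl|\sum_k (|\phi_k|^2 - |\psi_k|^2)\bigr|\,\dd x \le \sum_k \int_\Omega |\phi_k - \psi_k|\,(|\phi_k| + |\psi_k|)\,\dd x$, and applying Cauchy--Schwarz factor by factor bounds this by $\|\Phi - \Psi\|_{L^2}\,(\|\Phi\|_{L^2} + \|\Psi\|_{L^2})$. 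This gives the Lipschitz-type continuity of $\rho$ in the $L^1$ norm that feeds into the Hartree term.

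For the Hartree part I would show $\Psi \mapsto V_H(\rho)$ is continuous into $L^2(\Omega)$, where $V_H(\rho)(x) = \int_\Omega \rho(y)/|x-y|\,\dd y = (\rho \star w)$ restricted to $\Omega$. By Lemma~\ref{lem:CoulombInLp} with $n=3$ and $p=2$ we have $w \in L^2(\Omega)$, so Young's convolution inequality gives $\|\rho \star w\|_{L^2} \le \|w\|_{L^2}\,\|\rho\|_{L^1}$. Applying this to the difference $\rho_\Phi - \rho_\Psi$ and combining with the $L^1$-continuity of the density established above yields $\|V_H(\rho_\Phi) - V_H(\rho_\Psi)\|_{L^2} \le \|w\|_{L^2}\,\|\Phi - \Psi\|_{L^2}\,(\|\Phi\|_{L^2} + \|\Psi\|_{L^2})$, which is continuity on bounded sets of $L^2$. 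For the correlation potential $V_c$, Assumption~\ref{assumptions}(\ref{assumptionVcBounded}) provides a uniform bound $|V_c| \le K$, and together with weak differentiability in $\rho$ from Assumption~\ref{assumptions}(\ref{assumptionVxcDiffable}) one obtains continuity as a Nemytskii-type operator; the bounded range on the finite domain $\Omega$ keeps everything in $L^2(\Omega)$.

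The exchange part is the place where I expect the subtlety to concentrate. The natural route is to invoke Assumption~\ref{assumptions}(\ref{assumptionVxLipschitz}), but that assumption is stated for the composite product $V_x(\Psi)\Psi$, not for the bare potential $V_x(\Psi)$ on its own. For the explicit model form $V_x = c\rho^\beta$ with $0 < \beta < 1$, continuity of $\rho \mapsto c\rho^\beta$ as a pointwise map is governed by the Hölder continuity of $s \mapsto s^\beta$; to upgrade pointwise continuity to $L^2(\Omega)$ continuity I would argue through continuity in measure of $\rho$ (which follows from $L^1$-continuity) together with the domination provided by the finite measure of $\Omega$ and the boundedness of $\rho$ implicit from the regularity of the wave functions. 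This is exactly the main obstacle: making the exchange term land in $L^2$ requires controlling $\rho^\beta$ on $\Omega$, and the cleanest way is to restrict attention to the weak solutions of \eqref{eq:KSweak} as the assumption itself anticipates, where the higher regularity of $\Lambda$ (namely $L^2(0,T;H^2(\Omega))$ and the embedding $H^2 \hookrightarrow C(\bar\Omega)$) guarantees an $L^\infty$ bound on $\rho$.

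Once each of the three pieces $V_H$, $V_x$, $V_c$ is shown continuous from $L^2(\Omega;\C^N)$ to $L^2(\Omega)$, I would conclude by the triangle inequality that $V = V_H + V_x + V_c$ is continuous, completing the proof. The structure is therefore a decomposition argument in which Lemma~\ref{lem:CoulombInLp} and Young's inequality dispatch the nonlocal Hartree term, the uniform bound from Assumption~\ref{assumptions} dispatches the correlation term, and the Hölder/Nemytskii continuity argument on the finite domain $\Omega$ dispatches the exchange term.
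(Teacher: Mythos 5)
Your decomposition and the first two steps coincide exactly with the paper's own proof: the paper likewise begins by showing $\Psi\mapsto\rho$ is continuous from $L^2(\Omega)$ into $L^1(\Omega)$ (via the same Cauchy--Schwarz computation you give), and then handles the Hartree term by Lemma~\ref{lem:CoulombInLp} with $n=3>p=2$ together with Young's convolution inequality, $\|V_H(\rho_1)-V_H(\rho_2)\|_{L^2}=\|w\star(\rho_1-\rho_2)\|_{L^2}\leq\|w\|_{L^2}\|\rho_1-\rho_2\|_{L^1}$. Your treatment of $V_c$ (pointwise continuity from Assumption~\ref{assumptions}(\ref{assumptionVxcDiffable}), domination by the uniform bound $K$ of Assumption~\ref{assumptions}(\ref{assumptionVcBounded}) on the finite-measure domain, Vitali/dominated convergence along a.e.\ subsequences) is fine, and is in fact more complete than the paper's one-line remark that pointwise differentiability in $\rho$ implies continuity.

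The genuine problem is your resolution of the exchange term. The lemma asserts continuity on all of $L^2(\Omega)$, and it is invoked in Section~\ref{sec:ExistenceSolution} precisely to pass to the limit along the Galerkin sequence $\Psi_{m_l}\rightarrow\Psi$ in $Y$; these approximants are \emph{not} weak solutions of \eqref{eq:KSweak}, so restricting attention to weak solutions proves a statement too weak for its intended use, and is moreover circular at that stage, since the $H^2$ regularity you appeal to (Theorem~\ref{thm:ImprovedRegularity}) is established only \emph{after} existence. Note also that the $H^2(\Omega)\hookrightarrow C(\bar\Omega)$ bound in the paper concerns $\Lambda$, the forward solution appearing in the adjoint terms, not the argument $\Psi$ of $V$. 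The repair is elementary and needs no $L^\infty$ control of $\rho$: for $a,b\geq0$ and $0<\beta\leq1$ one has $|a^\beta-b^\beta|\leq|a-b|^\beta$, hence $\|\rho_1^\beta-\rho_2^\beta\|_{L^{1/\beta}}\leq\|\rho_1-\rho_2\|_{L^1}^{\beta}$, and since $\Omega$ is bounded, $L^{1/\beta}(\Omega)\hookrightarrow L^2(\Omega)$ whenever $\beta\leq\tfrac{1}{2}$ (in particular for the physical choice $\beta=\tfrac{1}{3}$ in $n=3$); combined with your $L^2\rightarrow L^1$ density estimate this gives continuity of $\rho\mapsto c\rho^\beta$ into $L^2(\Omega)$ with no boundedness hypothesis. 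In fairness, the paper's own proof dismisses $V_x$ and $V_c$ in a single sentence, so your instinct that this step deserves more care is sound --- but the correct fix is the Hölder estimate above, not a restriction of the lemma's scope.
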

\begin{proof}
First, we show that $\rho(\Psi)$ is a continuous mapping from $L^2(\Omega)$ to $L^1(\Omega)$
	  in the sense that from $\Psi^n \stackrel{L^2}{\rightarrow} \hat{\Psi}$ follows $ \rho^n \stackrel{L^1}{\rightarrow} \hat{\rho}$. We have
	\begin{align*}
		\|\hat{\rho}-\rho_n\|_{L^1}&=\int_{\Omega}\left|\sum_j |\hat{\psi}_j|^2-\sum_j|\psi_j^n+\hat{\psi}_j-\hat{\psi}_j|^2 \right|\dd x\\
		&\leq \int_\Omega \left| \sum_j |\hat{\psi}_j-\psi_j^n|^2+2|\hat{\psi}_j-\psi_j^n|\,|\hat{\psi}_j| \right|\dd x\\
		&\leq\|\hat{\psi}-\psi^n\|_{L^2}^2+2\|\hat{\psi}-\psi^n\|_{L^2}\|\hat{\psi}\|_{L^2}\\
		&=3\max\{c,1\}\|\hat{\psi}-\psi^n\|_{L^2}^2,
	\end{align*}
	where $c:=\|\hat{\psi}\|_{L^2}$ and we use Cauchy-Schwarz inequality.
	
Second, $V_H$ is a continuous function of $\rho$ as follows
	\begin{align*}
		\|V_H(\rho_1)-V_H(\rho_2) \|_{L^2}&=\|w\star(\rho_1-\rho_2)\|_{L^2}\leq \|w\|_{L^2} \|\rho_1-\rho_2\|_{L^1},
	\end{align*}
	where Lemma \ref{lem:CoulombInLp} and Young's inequality \cite[Theorem 14.6]{Schilling2005} are used.
	
Finally, since $V_x$ and $V_c$ are pointwise differentiable as functions of $\rho$, they are also continuous.
\end{proof}

We continue with some estimates for the bilinear form $B$ for arbitrary wave functions.
\begin{lemma}
\label{lem:bound-weak-form}
There exist positive constants $c_0$, $c_1$, $c_2$ and $c_3$
such that the following estimates hold
\begin{equation}\label{eq:lem3:estD}
	|D(\Psi, \Phi)|\leq c_0 \|\Psi\|_{L^2} \|\Phi\|_{L^2}  ,
\end{equation}
\begin{equation}
\label{eq:est1}
\Re B(\Psi,\Phi;u) \leq |B(\Psi,\Phi;u)| \leq c_1 \| \Psi \|_{H^1} \| \Phi \|_{H^1}  ,
\end{equation}
\begin{equation}
\label{eq:est1im}
|\Im B(\Psi,\Psi;u)| \leq c_0 \| \Psi \|_{L^2}^2  ,
\end{equation}
and
\begin{equation}
\label{eq:est2}
 \| \Psi \|_{H^1}^2 \leq \Re B(\Psi,\Psi;u) + c_3 \| \Psi \|_{L^2}^2  ,
\end{equation}
for any $\Psi,\Phi \in H^1(\Omega;\C^N)$.
\end{lemma}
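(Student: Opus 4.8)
The plan is to prove the four bounds in the stated order, since each reuses the previous one, and to lean throughout on three facts: the forward solution $\Lambda$ belongs to $L^2(0,T;H^2(\Omega;\C^N))$, so by $H^2(\Omega)\hookrightarrow C(\bar\Omega)$ it is bounded pointwise for a.e.\ $t$; the control $u\in H^1(0,T)$ is bounded in time by the one-dimensional Sobolev embedding; and every physical potential $V_{ext}$, $V_H$, $V_x$, $V_c$ is real-valued. These reduce all four statements to Cauchy--Schwarz estimates together with one Young/convolution bound.

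First I would establish \eqref{eq:lem3:estD} by treating $D_H$ and $D_{xc}$ separately. For $D_H(\Psi,\Phi)=\scalar{V_H(2\Re\scalarC{\Psi}{\Lambda})\Lambda}{\Phi}$, I peel off $\Phi$ by Cauchy--Schwarz, bound the pointwise factor $\Lambda$ by $\|\Lambda\|_{C(\bar\Omega)}$, and control the Hartree convolution with Young's inequality and Lemma \ref{lem:CoulombInLp} (which gives $w\in L^2(\Omega)$ for $n=3$): since $\|2\Re\scalarC{\Psi}{\Lambda}\|_{L^1}\leq 2\|\Psi\|_{L^2}\|\Lambda\|_{L^2}$, one gets $\|V_H(2\Re\scalarC{\Psi}{\Lambda})\|_{L^2}\leq 2\|w\|_{L^2}\|\Lambda\|_{L^2}\|\Psi\|_{L^2}$. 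For $D_{xc}$, the decisive input is Assumption \ref{assumptions}: using $|\Re\scalarC{\Psi}{\Lambda}|\leq|\Psi|\,|\Lambda|$ and $|\Lambda|^2=\rho(\Lambda)$, the pointwise modulus of the integrand $\pabl{V_{xc}}{\rho}(\Lambda)\Re\scalarC{\Psi}{\Lambda}\,\Lambda$ is at most $\bigl|\pabl{V_{xc}}{\rho}(\rho)\,\rho\bigr|\,|\Psi|$, and this bracket is bounded because $\rho(\Lambda)$ takes finite values ($\Lambda$ being bounded). Both pieces together yield $|D(\Psi,\Phi)|\leq c_0\|\Psi\|_{L^2}\|\Phi\|_{L^2}$.

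For \eqref{eq:est1} I would bound each of the four terms of $B$. The gradient term gives $\|\nabla\Psi\|_{L^2}\|\nabla\Phi\|_{L^2}$; the term $\scalar{V_{ext}\Psi}{\Phi}$ is at most $\|V_{ext}\|_{L^\infty}\|\Psi\|_{L^2}\|\Phi\|_{L^2}$, with $\|V_{ext}\|_{L^\infty}$ finite since $V_0,V_u\in L^\infty(\Omega)$ and $u$ is bounded in time; the term $(1-\alpha)\scalar{V(\Lambda)\Psi}{\Phi}$ needs $V(\Lambda)\in L^\infty$, which follows because $V_c$ is bounded (Assumption \ref{assumptions}), $V_x(\rho(\Lambda))=c\,\rho^\beta$ is bounded as $\rho(\Lambda)$ is, and $V_H(\Lambda)=w\star\rho(\Lambda)$ is bounded by Young ($\|w\star\rho\|_{L^\infty}\le\|w\|_{L^2}\|\rho\|_{L^2}$); finally $(1-\alpha)|D(\Psi,\Phi)|\le c_0\|\Psi\|_{H^1}\|\Phi\|_{H^1}$ by \eqref{eq:lem3:estD}. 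Summing the four bounds and using $\Re B\le|B|$ gives \eqref{eq:est1}.

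The last two bounds then follow from the reality of the potentials. Since $V_{ext}$ and $V(\Lambda)$ are real, $\scalar{V_{ext}\Psi}{\Psi}$ and $\scalar{V(\Lambda)\Psi}{\Psi}$ equal $\int_\Omega V_{ext}|\Psi|^2\,\dd x$ and $\int_\Omega V(\Lambda)|\Psi|^2\,\dd x$, hence are real, and $\scalar{\nabla\Psi}{\nabla\Psi}=\|\nabla\Psi\|_{L^2}^2$ is real too, so the only contribution to $\Im B(\Psi,\Psi;u)$ comes from $(1-\alpha)D(\Psi,\Psi)$; then \eqref{eq:est1im} is immediate from $|\Im D|\le|D|\le c_0\|\Psi\|_{L^2}^2$. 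For the Gårding inequality \eqref{eq:est2} I would take real parts, observe $\Re B(\Psi,\Psi;u)=\|\nabla\Psi\|_{L^2}^2+\int_\Omega V_{ext}|\Psi|^2\,\dd x+(1-\alpha)\int_\Omega V(\Lambda)|\Psi|^2\,\dd x+(1-\alpha)\Re D(\Psi,\Psi)$, and bound the last three terms below by $-\bigl(\|V_{ext}\|_{L^\infty}+\|V(\Lambda)\|_{L^\infty}+c_0\bigr)\|\Psi\|_{L^2}^2$ using \eqref{eq:lem3:estD}; adding $\|\Psi\|_{L^2}^2$ to both sides yields \eqref{eq:est2} with $c_3=\|V_{ext}\|_{L^\infty}+\|V(\Lambda)\|_{L^\infty}+c_0+1$. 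I expect the main obstacle to be \eqref{eq:lem3:estD}, and more precisely the verification of the $L^\infty$-type bounds on $V(\Lambda)$ and on $\pabl{V_{xc}}{\rho}(\Lambda)\,\rho$, which hinge on the $H^2$-regularity of $\Lambda$ and Assumption \ref{assumptions}; once those are in place, the remaining estimates are routine.
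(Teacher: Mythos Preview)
Your proposal is correct and follows essentially the same route as the paper: bound $D_{xc}$ via the $L^\infty$ control of $\pabl{V_{xc}}{\rho}(\Lambda)\rho(\Lambda)$ coming from Assumption~\ref{assumptions}(\ref{assumptionVxcDiffable}) and the $H^2\hookrightarrow C(\bar\Omega)$ regularity of $\Lambda$, bound $D_H$ by pulling out a sup of $\Lambda$ and applying Young's convolution inequality, then handle $|B|$, $\Im B(\Psi,\Psi;u)$, and the G{\aa}rding inequality exactly by observing that all terms except $D$ are real when $\Phi=\Psi$. The only cosmetic difference is your choice of Young exponents for $D_H$ (you use $w\in L^2$, $2\Re\scalarC{\Psi}{\Lambda}\in L^1$; the paper uses $w\in L^1$ and keeps the $\psi_j$ factor in $L^2$), and you spell out why $V(\Lambda)\in L^\infty$ a bit more explicitly than the paper does---both are harmless variations.
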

\begin{proof}
For $D(\Psi, \Phi)$ given by \eqref{eq:Dadjoint}, we use the fact that $\Lambda(\cdot, t)\in L^\infty(\Omega)$ and Assumption \ref{assumptions} (\ref{assumptionVxcDiffable}) to get
\begin{align*}
	|D_{xc}(\Psi, \Phi)| &=\left|\sum_{j=1}^N \sum_{k=1}^N \int_\Omega \pabl{V_{xc}}{\rho}(\Lambda) (\psi_j\co{\lambda_j}+\co{\psi_j}\lambda_j)\lambda_k \co{\phi_k} \dd x\right|\\
	&\leq 2\sum_{j=1}^N \sum_{k=1}^N \int_\Omega \left|\pabl{V_{xc}}{\rho}(\Lambda) \right| |\psi_j| \left(\sum_{l=1}^N |\lambda_l| \right)^2 |\phi_k| \dd x\\
	&\leq 2N\sum_{j=1}^N \sum_{k=1}^N \int_\Omega \left|\pabl{V_{xc}}{\rho}(\Lambda) \right| \left(\sum_{l=1}^N |\lambda_l|^2 \right)  |\psi_j| |\phi_k| \dd x\\
	&\leq 2N \sum_{j=1}^N \sum_{k=1}^N \left\|\pabl{V_{xc}}{\rho}(\Lambda)\sum_{l=1}^N |\lambda_l|^2 \right\|_{L^\infty}
	\int_\Omega (|\psi_j|+|\co{\psi_j}|) |\co{\phi_k}|\dd x\\
	&\leq 2N\left\|\pabl{V_{xc}}{\rho}(\Lambda)\sum_{l=1}^N |\lambda_l|^2\right\|_{L^\infty}
	\sum_{j=1}^N \sum_{k=1}^N  \|\psi_j\|_{L^2}\|\phi_k\|_{L^2}\\
	&\leq 2N^2\left\|\pabl{V_{xc}}{\rho}(\Lambda)\sum_{l=1}^N |\lambda_l|^2\right\|_{L^\infty} 
	  \|\Psi\|_{L^2}\|\Phi\|_{L^2}\\
	&\leq c_0'\| \Psi \|_{L^2} \| \Phi \|_{L^2}.
\end{align*}
Similarly, we have
\begin{align*}
	|D_H(\Psi, \Phi)| &=\left| \sum_{k=1}^N \int_\Omega \int_\Omega \frac{\sum_{j=1}^N(\psi_j(y)\co{\lambda_j(y)}+\co{\psi_j(y)}\lambda_j(y))}{|x-y|} \lambda_k(x) \co{\phi_k(x)} \dd y\dd x\right|\\
	&\leq \|\Lambda\|_{L^\infty}^2 \scalar{I_N\otimes\sum_{j=1}^N(|\co{\psi_j}|+|\psi_j|)\star w}{|\Phi|}\\
	&\leq\|\Lambda\|_{L^\infty}^2 \sqrt{N}\left\| \sum_{j=1}^N(|\co{\psi_j}|+|\psi_j|)\star w\right\|_{L^2(\Omega;\C)} \|\Phi\|_{L^2}\\
	&\leq 2N^\frac{3}{2} \left\|w\right\|_{L^1} \|\Lambda\|_{L^\infty}^2  \|\Psi\|_{L^2}\|\Phi\|_{L^2}\\
		&\leq c_0''\| \Psi \|_{L^2} \| \Phi \|_{L^2},
\end{align*}
where Young's inequality, see e.g. \cite[Theorem 14.6]{Schilling2005}, is used.
Together, we have the desired bound on $D$ with $c_0=c_0'+c_0''$.

For the second estimate, we first recall that  the embedding $H^1(0,T) \hookrightarrow C[0,T]$ is
continuous and compact (see, e.g.,  \cite{Ciarlet2013}),
hence there exists a positive constant $K$
such that $\| u \|_{C[0,T]} \leq K \| u \|_{H^1(0,T)}$ for any $u \in H^1(0,T)$; this is used for the control $u$.
Consequently, recalling \eqref{eq:bilinearForm}, we obtain the following estimate
\begin{equation}
\label{eq:pppp}
\begin{split}
  | B(\Psi,\Phi;u) |
&\leq \| \nabla \Psi \|_{L^2} \| \nabla \Phi \|_{L^2} +(1-\alpha)\bigl(|D(\Psi,\Phi)|+\|V(\Lambda)\Psi\|\|\Phi\|\bigr)\\
&\quad + \Bigl| \int_{\Omega} \scalarC{ V_0(x) \Psi(x) }{ \Phi(x) } \dd x \Bigr|
+ \Bigl| \int_{\Omega} \scalarC{ V_u(x) u(t) \Psi(x)}{ \Phi(x) } \dd x \Bigr| \\
&\leq \| \Psi \|_{H^1} \|\Phi \|_{H^1} +(c_0+\|V(\Lambda)\|_{L^\infty}) \| \Psi \|_{H^1} \|\Phi \|_{H^1}\\
&\quad + \| V_0 \|_{L^{\infty}} \| \Psi \|_{L^2} \| \Phi \|_{L^2}
+ K \| u \|_{H^1(0,T)} \| V_u \|_{L^{\infty}} \| \Psi \|_{L^2} \| \Phi \|_{L^2} \\
&= \Bigl( 1 + c_0+\|V(\Lambda)\|_{L^\infty}+\| V_0 \|_{L^{\infty}} + K \| u \|_{H^1(0,T)} \| V_u \|_{L^{\infty}} \Bigr) \| \Psi \|_{H^1} \| \Phi \|_{H^1}  ,
\end{split}
\end{equation}
hence there exists a constant $c_1$ such that \eqref{eq:est1} holds.

The estimate \eqref{eq:est1im} is easily verified with the above estimates for $D$, as $\scalar{\nabla \Psi}{\nabla \Psi}$ and $\scalar{V_{ext}\Psi}{\Psi}$ are real.

To prove the last statement, we recall \eqref{eq:bilinearForm},
and similar to \eqref{eq:pppp} we have the following
\begin{equation*}
\scalar{ \nabla \Psi }{ \nabla \Psi }
= B( \Psi , \Psi ; u ) - \scalar{ V_{ext} \Psi }{ \Psi } - (1-\alpha)D(\Psi,\Psi)-(1-\alpha)\scalar{V(\Lambda)\Psi}{\Psi}.\\
\end{equation*}
Taking the real part of this equation results in
\begin{equation}
\label{eq:pppp1}
\begin{split}
\scalar{ \nabla \Psi }{ \nabla \Psi } \leq \Re B( \Psi , \Psi ; u ) &+
\Bigl( \| V_0 \|_{L^{\infty}} + K \| u \|_{H^1(0,T)} \| V_u \|_{L^{\infty}}\\
 &+(1-\alpha)(c_0 +\|V(\Lambda)\|_{L^\infty})\Bigr) \| \Psi \|_{L^2}^2  .
\end{split}\end{equation}

Adding $\|\Psi\|_{L^2}^2$ on both sides we obtain
\begin{equation}
\label{eq:pppp3}
\| \Psi \|_{H^1}^2 \leq \Re B( \Psi , \Psi ; u ) +c_3 \| \Psi \|_{L^2}^2  ,
\end{equation}
where $c_3=\Bigl( \| V_0 \|_{L^{\infty}} + K \| u \|_{H^1(0,T)} \| V_u \|_{L^{\infty}} +c_0 +\|V(\Lambda)\|_{L^\infty}+1 \Bigr)$,
hence \eqref{eq:est2} holds.
\end{proof}

\section{A Galerkin approach}
\label{sec:Galerkin}
In this section, we introduce a finite-dimensional subspace $P_m$ of $H^1_0(\Omega; \C^N)$, and show existence of a unique solution of \eqref{eq:KSweak} in this subspace.
To this end, we take smooth functions ($C^\infty_0(\Omega)$)
$\phi_k=\phi_k(x)$, for $k=1,2,\dots$, such that
$\{\phi_k\}_k$ is an orthogonal basis for $H^1_0(\Omega)$ and
an orthonormal basis for $L^2(\Omega)$.
Further, we construct a basis $\{\Phi_k\}_k$ that is 
orthogonal for $H^1_0(\Omega;\C^N)$ and orthonormal for $L^2(\Omega;\C^N)$
by defining
\begin{equation}
\Phi_k(x) := \frac{1}{\sqrt{N}} \begin{pmatrix}
	\phi_{k}(x) \\ \vdots \\ \phi_{k}(x)\\ \vdots \\ \phi_{k}(x)
	\end{pmatrix}  ,
\end{equation}
where $\frac{1}{\sqrt{N}}$ is a normalization parameter.

For a fixed positive integer $m$, we define a function $\Psi_m$ as follows
\begin{align}
\label{eq:form}
\Psi_m(x,t) &:= \sum_{k=1}^m d^k_m(t) \Phi_k(x)\\
	&=\frac{1}{\sqrt{N}}\sum_{k=1}^{m} \begin{pmatrix}
	d_{m,1}^k(t)&&&&0\\
	&\ddots\\
	&& d_{m,j}^k(t)\\
	&&& \ddots\\
	0&&&& d_{m,N}^k(t)
	\end{pmatrix} \begin{pmatrix}
	 \phi_{k}(x)\\ \vdots \\ \phi_{k}(x)\\\vdots \\  \phi_{k}(x)
	\end{pmatrix}  ,
\end{align}
where the coefficients $d^k_{m,j}:[0,T] \rightarrow \R$ are such that
\begin{equation}
\label{eq:initial}
d^k_{m,j}(0) = \scalar{ \psi_{0,j} }{ \phi_{k,j} }  ,
\end{equation}
for $k= 1 , \dots , m$.
The space spanned by the first $m$ basis functions is called
\begin{equation*}
	P_m=\operatorname*{span}_{k=1,\dotsc,m}\{\Phi_k\}.
\end{equation*}

Moreover, by testing \eqref{eq:KSweak} for $\Phi=\Phi_k$, we obtain the following
\begin{equation}
\label{eq:KSweak_test}
i \scalar{ \partial_t \Psi_m }{ \Phi_k }
= B(\Psi_m,\Phi_k;u) + \alpha\scalar{ V(\Psi_m) \Psi_m }{ \Phi_k }
+ \scalar{ F }{ \Phi_k }  ,
\end{equation}
for almost all $0 \leq t \leq T$ and all $k =  1 , \dots , m$.
Thus we seek a solution $\Psi_m$ in the form \eqref{eq:form} that satisfies
the projection \eqref{eq:KSweak_test} of problem \eqref{eq:KSweak}
onto the finite dimensional subspace $W_m=L^2(0,T;P_m)$.

\begin{lemma}\label{lem:GlocallyLipschitz}
Recall that $\Psi_m := \sum_{k=1}^m d_m^k \Phi_k$ as in \eqref{eq:form} and define
$G^k: \C^N\rightarrow \C^N$ as $d_m\mapsto G^k(d_m):=\scalar{V(\Psi_m)\Psi_m}{\Phi_k}$, $d_m=(d_m^1,\dotsc,d_m^m)$.
The map $d_m \mapsto G^k(d_m)$ is locally Lipschitz continuous in $d_m$.
\end{lemma}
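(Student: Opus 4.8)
The plan is to fix a radius $R>0$ and prove a Lipschitz estimate for $G^k$ on the ball $\{d_m : \|d_m\|\le R\}$; since local Lipschitz continuity is precisely a statement about such balls, this is enough. First I would use the Cauchy--Schwarz inequality together with $\|\Phi_k\|_{L^2}=1$ to reduce the estimate to a bound on $\|V(\Psi_m)\Psi_m - V(\tilde\Psi_m)\tilde\Psi_m\|_{L^2}$, where $\tilde\Psi_m=\sum_k \tilde d_m^k\Phi_k$ is the function associated with a second coefficient vector $\tilde d_m$. The crucial simplification is that $d_m\mapsto \Psi_m$ is linear and $P_m$ is finite dimensional: since $\{\Phi_k\}$ is orthonormal in $L^2(\Omega;\C^N)$ we have $\|\Psi_m-\tilde\Psi_m\|_{L^2}=\|d_m-\tilde d_m\|$, while the equivalence of all norms on $P_m$ lets me control $\|\Psi_m\|_{H^1}$ and $\|\Psi_m\|_{C(\bar\Omega)}$ (the latter using $\phi_k\in C_0^\infty(\Omega)$) by $\|d_m\|\le R$. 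In particular the densities $\rho_m,\tilde\rho_m$ remain in a fixed compact interval $[0,\bar\rho]$ pointwise on the ball of radius $R$.

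Next I would split $V=V_H+V_x+V_c$ and treat the three contributions separately. For the Hartree part, Lemma~\ref{CancesLemma} gives
\[
\|V_H(\Psi_m)\Psi_m - V_H(\tilde\Psi_m)\tilde\Psi_m\|_{L^2}\le C_u\bigl(\|\Psi_m\|_{H^1}^2+\|\tilde\Psi_m\|_{H^1}^2\bigr)\|\Psi_m-\tilde\Psi_m\|_{L^2},
\]
and on the ball of radius $R$ the prefactor is bounded by a constant depending only on $m$ and $R$ thanks to the norm equivalence above, so this term is Lipschitz with the desired constant. For the exchange part, Assumption~\ref{assumptions}(\ref{assumptionVxLipschitz}) directly provides $\|V_x(\Psi_m)\Psi_m-V_x(\tilde\Psi_m)\tilde\Psi_m\|_{L^2}\le L\|\Psi_m-\tilde\Psi_m\|_{L^2}=L\,\|d_m-\tilde d_m\|$ with $L=L(R)$.

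The remaining correlation term is where the real work lies, and I expect it to be the main obstacle, because Assumption~\ref{assumptions}(\ref{assumptionVcBounded}) only furnishes boundedness of $V_c$ while Assumption~\ref{assumptions}(\ref{assumptionVxcDiffable}) only controls $\pabl{V_c}{\rho}\rho$ rather than $\pabl{V_c}{\rho}$ itself. I would write
\[
V_c(\rho_m)\Psi_m-V_c(\tilde\rho_m)\tilde\Psi_m = V_c(\rho_m)(\Psi_m-\tilde\Psi_m)+\bigl(V_c(\rho_m)-V_c(\tilde\rho_m)\bigr)\tilde\Psi_m.
\]
The first summand is immediately controlled in $L^2$ by $K\|\Psi_m-\tilde\Psi_m\|_{L^2}$ using the uniform bound $|V_c|\le K$. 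For the second summand I would apply the fundamental theorem of calculus to $V_c$ along the segment joining $\tilde\rho_m$ to $\rho_m$, estimate $|\rho_m-\tilde\rho_m|\le \sum_j|\psi_{m,j}-\tilde\psi_{m,j}|\,(|\psi_{m,j}|+|\tilde\psi_{m,j}|)$, and exploit that both densities lie in the compact interval $[0,\bar\rho]$; the weight $\tilde\Psi_m$ supplies a factor bounded by $\sqrt{\tilde\rho_m}$, which is exactly where the boundedness of $\pabl{V_c}{\rho}\rho$ is invoked.

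Collecting the three estimates yields $|G^k(d_m)-G^k(\tilde d_m)|\le C(m,R)\,\|d_m-\tilde d_m\|$, i.e. local Lipschitz continuity. The delicate point, which I would present most carefully, is the behaviour of the correlation term near $\rho=0$: the degenerate bound on $\pabl{V_c}{\rho}$ must be combined with the compactness of the density range on $P_m$ (a consequence of working in a finite-dimensional space of smooth functions) in order to keep the Lipschitz constant finite. For the physically relevant correlation potentials, e.g.\ the Wigner potential of Assumption~\ref{assumptions}(\ref{assumptionVcBounded}), $V_c$ is in fact genuinely Lipschitz and this subtlety disappears.
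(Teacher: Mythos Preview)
Your proposal is correct and follows essentially the same route as the paper: reduce via Cauchy--Schwarz to an $L^2$ bound on $V(\Psi_m)\Psi_m - V(\tilde\Psi_m)\tilde\Psi_m$, exploit finite-dimensionality of $P_m$ to control all relevant norms of $\Psi_m$ by $\|d_m\|$, then split $V=V_H+V_x+V_c$ and handle the three pieces via Lemma~\ref{CancesLemma}, Assumption~\ref{assumptions}(\ref{assumptionVxLipschitz}), and the assumptions on $V_c$, respectively; finally translate $\|\Psi_m-\tilde\Psi_m\|_{L^2}$ back to $|d_m-\tilde d_m|$.

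The only substantive difference is in the correlation term. The paper disposes of it in one line,
\[
|\scalar{V_c(\Psi_m)\Psi_m - V_c(\Upsilon_m)\Upsilon_m}{\Phi_k}| \le \|V_c\|_{L^\infty}\|\Psi_m-\Upsilon_m\|_{L^2},
\]
citing only the uniform bound $|V_c|\le K$, without explicitly isolating the cross term $(V_c(\rho_m)-V_c(\tilde\rho_m))\tilde\Psi_m$. Your decomposition of this term and appeal to Assumption~\ref{assumptions}(\ref{assumptionVxcDiffable}) together with the pointwise $L^\infty$ control available on $P_m$ is a more careful treatment of the same step; as you correctly observe, for the concrete correlation potentials of physical interest (e.g.\ Wigner) $V_c$ is genuinely Lipschitz on bounded density ranges and the subtlety disappears. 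In short, your argument is the paper's argument with the $V_c$ estimate spelled out more fully.
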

\begin{proof}
We want to show local Lipschitz continuity in $d_m$, i.e. that for every $\epsilon>0$
there exists a positive constant $L$ such that
$|G^k(d_m)-G^k(b_m)| \leq L |d_m-b_m^2|$ for all $d_m,b_m\in B_\epsilon(0)$.
	For a wave function in the Galerkin subspace with $d_m\in B_\epsilon(0)$, we have the following bounds
	\begin{align*}
		\|\Psi_m\|_{L^2}^2 &= \int_{\Omega} |\sum_{k=1}^m d_m^k \Phi_k(x)|^2\dd x\\
		&\leq m\sum_{k=1}^m |d_m^k|^2 \int_{\Omega} |\Phi_k(x)|^2\dd x
		=m\sum_{k=1}^m |d_m^k|^2\leq m^2 \epsilon^2,
	\end{align*}
	\begin{align*}
		\|\nabla\Psi_m\|_{L^2}^2 &= \int_{\Omega} |\sum_{k=1}^m d_m^k \nabla\Phi_k(x)|^2\dd x\\
		&\leq m\sum_{k=1}^m |d_m^k|^2 \int_{\Omega} |\nabla \Phi_k(x)|^2\dd x
		=m\sum_{k=1}^m |d_m^k|^2 C_m \leq m^2 \epsilon^2 C_m,
	\end{align*}
	with $C_m=\max_{k=1,\dotsc, m} \|\nabla \Phi_k\|_{L^2}^2$. From these two bounds, we obtain\linebreak $\|\Psi_m\|_{H^1}\leq (c_m+1) m^2\epsilon^2$.
	Now, we prove local Lipschitzianity for the different potentials. Consider $\Psi_m$ and $\Upsilon_m$ with coefficients in $B_\epsilon(0)$. We obtain the following
	\begin{equation}\label{eq:GalerkinVH}\begin{split}
		|\scalar{V_H(\Psi_m)\Psi_m-V_H(\Upsilon_m)\Upsilon_m}{\Phi_k}|
		&\leq \|V_H(\Psi_m)\Psi_m-V_H(\Upsilon_m)\Upsilon_m\|_{L^2} \|\Phi_k\|_{L^2}\\
		&\leq C_u \left( \|\Psi_m\|_{H^1}^2+\|\Upsilon_m\|_{H^1}^2\right)\|\Psi_m-\Upsilon\|_{L^2}\\
		&\leq L \|\Psi_m-\Upsilon_m\|_{L^2},
	\end{split}\end{equation}
	where the constant $L$ depends on the dimension of the Galerkin space $m$, the norm of the derivatives of the basis functions $C_m$ and $\epsilon$.
	
	For the exchange-correlation potential, we have from the Assumption \ref{assumptions} (\ref{assumptionVcBounded}) and (\ref{assumptionVxLipschitz}) the following estimates
	\begin{equation}\label{eq:GalerkinVxc}\begin{split}
		|\scalar{V_c(\Psi_m)\Psi_m-V_c(\Upsilon_m)\Upsilon_m}{\Phi_k}|
		&\leq \|V_c\|_{L^\infty}\|\Psi_m-\Upsilon_m\|_{L^2},\\
		|\scalar{V_x(\Psi_m)\Psi_m-V_x(\Upsilon_m)\Upsilon_m}{\Phi_k}|
		&\leq \tilde{L}\|\Psi_m-\Upsilon_m\|_{L^2}.
	\end{split}\end{equation}
	Using the estimates \eqref{eq:GalerkinVH} and \eqref{eq:GalerkinVxc}, we have
	\begin{align*}
		|G^k(d_m^1)-G^k(d_m^2)|
		&=|\scalar{V_H(\Psi_m)\Psi_m+V_x(\Psi_m)\Psi_m+V_c(\Psi_m)\Psi_m \right.\\
		&\left.\quad-V_H(\Upsilon_m)\Upsilon_m-V_x(\Upsilon_m)\Upsilon_m-V_c(\Upsilon_m)\Upsilon_m}{\Phi_k}|\\
		&\leq |\scalar{V_H(\Psi_m)\Psi_m-V_H(\Upsilon_m)\Upsilon_m}{\Phi_k}|\\
		&\quad+|\scalar{V_c(\Psi_m)+V_x(\Psi_m)-V_c(\Upsilon_m)-V_x(\Upsilon_m)}{\Phi_k}|\\
		&\leq L\|\Psi_m-\Upsilon_m\|_{L^2}+L'\|\Psi_m-\Upsilon_m\|_{L^2}.
	\end{align*}
	Further, we have
	\begin{align*}
		\|\Psi_m-\Upsilon_m\|_{L^2}^2&=\int_\Omega \left|\sum_{l=1}^m (d_{m,1}^l-d_{m,2}^l)\Phi_l(x)\right|^2 \dd x\\
		&\leq m \int_\Omega \sum_{l=1}^m |d_{m,1}^l-d_{m,2}^l|^2 |\Phi_l(x)|^2 \dd x\\
		&=m\sum_{l=1}^m |d_{m,1}^l-d_{m,2}^l|^2 \int_\Omega|\Phi_l(x)|^2 \dd x\\
		&\leq m |d_{m,1}(t)-d_{m,2}(t)|^2.
	\end{align*}
	
	All together, we have that $d_m \mapsto G^k(d_m)$ is locally Lipschitz continuous.
\end{proof}

To show existence of a unique solution in the finite-dimensional Galerkin space, we use the Carathéodory theorem, see, e.g., \cite{Walter1998}, because the time-dependent coefficients satisfy our differential equation only almost everywhere.
\begin{theorem}[Carathéodory]\label{thm:Caratheodory}
Consider the following initial value problem
\begin{equation}\label{eq:ODECaratheodory}
	\pa_t y(t)=f(t,y),\quad y(0)=\eta.
\end{equation}
Let $S=[0, T]\times \R^m$ and assume
	 $f$ satisfies $f(\cdot,y)\in L^1(0,T)$ for fixed $y$ and a generalized Lipschitz condition
	\begin{equation}
		|f(t,y)-f(t,\bar{y})|\leq l(t)|y-\bar{y}| \quad \text{in }S, \text{ where } l(t)\in L^1(0,T).
	\end{equation}
Then there exists a unique absolutely continuous solution satisfying \eqref{eq:ODECaratheodory} a.e. in $[0,T]$.
\end{theorem}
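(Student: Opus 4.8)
The plan is to recast \eqref{eq:ODECaratheodory} as the equivalent Volterra integral equation
\begin{equation*}
y(t) = \eta + \int_0^t f(s, y(s))\, \dd s
\end{equation*}
and to produce the solution as the unique fixed point of the associated integral operator. First I would record the equivalence: a function $y$ is absolutely continuous on $[0,T]$ and satisfies $\pa_t y = f(t,y)$ a.e.\ with $y(0) = \eta$ if and only if $y$ is continuous and satisfies the integral equation above. This rests on the characterization of absolutely continuous functions as indefinite Lebesgue integrals of their a.e.-defined derivatives, so that the fundamental theorem of calculus lets one differentiate the integral equation to recover the differential one a.e., and conversely integrate the differential equation back to the integral form.

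Next I would introduce the operator $(Ty)(t) := \eta + \int_0^t f(s,y(s))\, \dd s$ on $C([0,T];\R^m)$ and check that it is well defined, i.e.\ that it maps continuous functions to (absolutely continuous, hence continuous) functions. The Lipschitz hypothesis gives the pointwise bound
\begin{equation*}
|f(s, y(s))| \leq l(s)\, |y(s) - \eta| + |f(s, \eta)|,
\end{equation*}
and since a continuous $y$ is bounded on $[0,T]$ while both $l$ and $f(\cdot, \eta)$ lie in $L^1(0,T)$, the integrand is integrable; thus $Ty$ is well defined and absolutely continuous.

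The core step is a contraction estimate in the weighted (Bielecki) norm $\|y\|_w := \sup_{t\in[0,T]} e^{-L(t)}\, |y(t)|$, where $L(t) := \int_0^t l(s)\, \dd s$. For $y_1, y_2 \in C([0,T];\R^m)$ the generalized Lipschitz condition together with the bound $|y_1(s) - y_2(s)| \leq e^{L(s)} \|y_1 - y_2\|_w$ yields
\begin{align*}
|(Ty_1)(t) - (Ty_2)(t)|
&\leq \int_0^t l(s)\, |y_1(s) - y_2(s)|\, \dd s\\
&\leq \|y_1 - y_2\|_w \int_0^t l(s)\, e^{L(s)}\, \dd s
= \|y_1 - y_2\|_w \bigl(e^{L(t)} - 1\bigr),
\end{align*}
using $L(0) = 0$. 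Multiplying by $e^{-L(t)}$ and taking the supremum over $t$ gives $\|Ty_1 - Ty_2\|_w \leq \bigl(1 - e^{-L(T)}\bigr) \|y_1 - y_2\|_w$, which is a genuine contraction because $L(T) < \infty$ follows from $l \in L^1(0,T)$. Since $e^{-L(T)} \|y\|_\infty \leq \|y\|_w \leq \|y\|_\infty$, the weighted norm is equivalent to the supremum norm and $\bigl(C([0,T];\R^m), \|\cdot\|_w\bigr)$ is complete; the Banach fixed point theorem then furnishes a unique fixed point, which by the equivalence above is the unique absolutely continuous solution of \eqref{eq:ODECaratheodory}.

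The step requiring the most care is not the contraction estimate, which is clean once the right norm is chosen, but the very well-definedness of $T$: one must know that $s \mapsto f(s, y(s))$ is measurable for continuous (indeed merely measurable) $y$. This is exactly where the Carathéodory structure is used — Lipschitz continuity in $y$ makes $f(t,\cdot)$ continuous for each $t$, while $f(\cdot, y) \in L^1(0,T)$ gives measurability in $t$ for each fixed $y$; approximating $y$ by step functions and passing to the limit then yields measurability of the composition. I note that the global, uniform-in-$y$ Lipschitz bound with integrable $l$ is precisely what lets the argument run directly on all of $[0,T]$, with no need for a local-existence-plus-continuation argument.
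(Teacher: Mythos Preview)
Your argument is correct and complete. The integral-equation reformulation, the Bielecki weight $e^{-L(t)}$ with $L(t)=\int_0^t l(s)\,\dd s$, and the resulting contraction factor $1-e^{-L(T)}<1$ are all handled cleanly, and you are right to flag the measurability of $s\mapsto f(s,y(s))$ as the step that actually uses the Carath\'eodory structure.

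As for comparison with the paper: there is nothing to compare. The paper does not prove this theorem at all; it merely states it and cites Walter's textbook. So your proposal supplies what the paper omits. The weighted-norm fixed-point approach you chose is the standard route for Carath\'eodory-type results with an $L^1$ Lipschitz modulus, and it has the advantage over a Picard-iteration-with-local-existence argument that it delivers global existence on $[0,T]$ in one stroke, exactly because the integrable Lipschitz bound makes the Bielecki contraction work on the whole interval.
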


\begin{theorem}[Construction of approximate solutions]
\label{thm:approxS}
For each integer $m=1,2,\dotsc$ there exists a unique function $\Psi_m \in W_m$ of the form \eqref{eq:form} satisfying \eqref{eq:initial} and \eqref{eq:KSweak_test}.
\end{theorem}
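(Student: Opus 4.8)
The plan is to recast the projected weak problem \eqref{eq:KSweak_test} as a first-order system of ordinary differential equations for the coefficient vector $d_m=(d^k_{m,j})_{k=1,\dots,m;\,j=1,\dots,N}$ and then invoke the Carathéodory theorem (Theorem \ref{thm:Caratheodory}). First I would substitute the ansatz \eqref{eq:form} into \eqref{eq:KSweak_test}. Because $\{\Phi_k\}_k$ is orthonormal in $L^2(\Omega;\C^N)$, the left-hand side $i\scalar{\partial_t\Psi_m}{\Phi_k}$ reduces to a nonsingular (indeed, up to the normalization, diagonal) linear expression in the derivatives $\dot d_m$, so the system can be solved explicitly as $i\,\dot d_m=f(t,d_m)$. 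The right-hand side $f$ splits into three pieces: a part linear in $d_m$ coming from the bilinear form $B$ -- including, in the adjoint case $\alpha=0$, the terms $\scalar{V(\Lambda)\Psi}{\Phi}$ and $D(\Psi,\Phi)$ -- whose coefficients depend on $t$ through $u(t)$ and $\Lambda(t)$; the nonlinear contribution $\alpha\,G^k(d_m)=\alpha\scalar{V(\Psi_m)\Psi_m}{\Phi_k}$; and the inhomogeneous term $\scalar{F(t)}{\Phi_k}$, with the initial value fixed by \eqref{eq:initial}.

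Next I would check the two hypotheses of Theorem \ref{thm:Caratheodory}. Integrability $f(\cdot,y)\in L^1(0,T)$ for fixed $y$ holds because all time-dependent coefficients are bounded: $u\in H^1(0,T)\hookrightarrow C[0,T]$ is bounded, $\Lambda(t)\in L^\infty(\Omega)$ a.e. in $(0,T)$ by Assumption \ref{assumptions}, and the associated bilinear-form contributions are controlled by Lemma \ref{lem:bound-weak-form}; moreover $t\mapsto\scalar{F(t)}{\Phi_k}$ lies in $L^2(0,T)\subset L^1(0,T)$ since $F\in Y$. For the Lipschitz condition, the linear part of $f$ is globally Lipschitz in $d_m$ with a bounded, hence $L^1$, coefficient $l(t)$, while the nonlinear part $\alpha G^k$ is locally Lipschitz in $d_m$ by Lemma \ref{lem:GlocallyLipschitz}.

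The main obstacle is that Lemma \ref{lem:GlocallyLipschitz} provides only a local Lipschitz constant, whereas Theorem \ref{thm:Caratheodory} as stated asks for a Lipschitz bound valid on all of $[0,T]\times\R^{mN}$. I would therefore first apply a localized version of Carathéodory on a ball to obtain a unique absolutely continuous solution on a maximal subinterval $[0,t_m)\subseteq[0,T]$, and then exclude finite-time blow-up by an a priori bound. Testing \eqref{eq:KSweak_test} with $\Phi=\Psi_m$ and using $\frac{\dd}{\dd t}\|\Psi_m\|_{L^2}^2=2\Re\scalar{\partial_t\Psi_m}{\Psi_m}$, the real part selects the imaginary part of the right-hand side; since $\scalar{V(\Psi_m)\Psi_m}{\Psi_m}$ is real and \eqref{eq:est1im} bounds $|\Im B(\Psi_m,\Psi_m;u)|$ by $c_0\|\Psi_m\|_{L^2}^2$, this yields a differential inequality $\frac{\dd}{\dd t}\|\Psi_m\|_{L^2}^2\leq C\|\Psi_m\|_{L^2}^2+\|F\|_{L^2}^2$. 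A Grönwall estimate then bounds $\|\Psi_m(t)\|_{L^2}$, equivalently $|d_m(t)|$, uniformly on $[0,T]$, so the trajectory cannot escape to infinity in finite time and the solution extends with $t_m=T$. Uniqueness on $[0,T]$ follows from the Carathéodory uniqueness on the maximal interval together with this bound, and $\Psi_m\in W_m=L^2(0,T;P_m)$ is immediate because $d_m$ is absolutely continuous on $[0,T]$ and $P_m$ is finite-dimensional.
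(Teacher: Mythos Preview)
Your approach is essentially the same as the paper's: reduce \eqref{eq:KSweak_test} to a first-order ODE system for the coefficients, split the right-hand side into a linear part from $B$, the nonlinear term $\alpha G^k$, and the inhomogeneity $\scalar{F}{\Phi_k}$, and then invoke the Carathéodory theorem using Lemma~\ref{lem:GlocallyLipschitz} for the Lipschitz condition. The paper simply stops at this point, applying Theorem~\ref{thm:Caratheodory} directly after observing local Lipschitz continuity, whereas you go further and close the gap between local Lipschitz continuity and the global hypothesis of Theorem~\ref{thm:Caratheodory} by deriving an $L^2$ a~priori bound (anticipating the first estimate of Theorem~\ref{thm:estimates}) and using it to rule out blow-up on $[0,T]$; this extra step is correct and, strictly speaking, makes your argument more complete than the paper's.
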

\begin{proof}
Assuming $\Psi_m$ has the structure \eqref{eq:form}, we note from the fact that $\Phi_k$ are an orthonormal basis that
\begin{equation}
	\scalar{\pa_t\Psi_m(t)}{\Phi_k}=\pa_t{d_m^k}(t).
\end{equation}
Furthermore
\begin{align*}
	B(\Psi_m, \Phi_k; u)-(1-\alpha)D(\Psi_m, \Phi_k)&=\sum_{l=1}^{m} e^{kl}(t) d_m^l(t),\\
	D(\Psi_m, \Phi_k)&=\sum_{l=1}^m \tilde{e}^{kl} \Re(d_m^l(t)),
\end{align*}
for $e^{kl}:=B(\Phi_l,\Phi_k; u)-(1-\alpha)D(\Phi_l,\Phi_k)$, and $\tilde{e}^{kl}:= D(\Phi_l,\Phi_k)$ $k,l=1,\dotsc,m$. The real part comes from the definition of $D$ which already contains $\Re\Psi_m$.
Define $f^k(t):=\scalar{F(t)}{\Phi_k}$.

Then \eqref{eq:KSweak_test} becomes  a nonlinear system of ODEs as follows
\begin{align}\label{eq:ODEm}
	&i\pa_t{d_m^k}(t)=\sum_{l=1}^{m}e^{kl}(t) d_m^l(t)+(1-\alpha)\sum_{l=1}^{m}\tilde{e}^{kl}(t) \Re (d_m^l(t))+f^k(t)+\alpha G^k(d_m^l(t)),
\end{align}
for $k=1,\dotsc,m$ with the initial conditions \eqref{eq:initial}.

In \eqref{eq:ODEm}, the first term is linear, the second globally Lipschitz continuous with Lipschitz constant 1, and $f$ is constant with respect to $d_m$. By Lemma \ref{lem:GlocallyLipschitz}, $G^k$ is locally Lipschitz continuous in $d_m$ on every ball $B_r(0)$, so the right hand side is locally Lipschitz in $d_m$.
As $G^k(d_m(t))$ depends on $t$ only through $d_m^l(t)$ and $f\in L^2(0,T)$ and $e^{kl}\in H^1(0,T)$ through $u$, the right hand side is also in $L^2(0,T)$ and therefore the required $L^1(0,T)$-bound exists.
Hence, we can invoke the Carathéodory theorem  to show that \eqref{eq:ODEm} has a unique solution in the sense of Theorem \ref{thm:Caratheodory}.
\end{proof}

\section{Energy estimates}\label{sec:EnergyEstimates}
In this section, we discuss energy estimates concerning our evolution problem that are used to prove existence of solutions in $W$.
Further, we apply these energy estimates for solutions in $W$ to show uniqueness of the solution.

\begin{theorem}
\label{thm:estimates}
Let  $\Psi\in W_m$ be a solution of 
\begin{equation}\label{eq:WeakFormHilbertSpace}
i \scalar{ \partial_t \Psi(t) }{ \Phi }
= B(\Psi(t),\Phi;u(t)) + \alpha \scalar{ V(\Psi(t)) \Psi(t) }{ \Phi }
+ \scalar{ F(t) }{ \Phi }, \quad \forall \Phi \in P_m,
\end{equation}
a.e. in $(0,T)$. Then there exist positive constants $C$, $C_0$, $C_1$, $C'$ and $C''$
such that the following estimates hold
\begin{align}
\label{eq:estimate-1}
\max_{0\leq t \leq T} \| \Psi(t) \|_{L^2}^2 &\leq C \Bigl( \| \Psi_0 \|_{L^2}^2 + \| F \|_Y^2 \Bigr)  ,\\
\label{eq:Bestimate}
	\Re B(\Psi(t), \Psi(t); u)&\leq |B(\Psi(t), \Psi(t); u)|  \leq C_0(\|\Psi_0\|_{L^2}^2+\|F\|_{Y}^2) \text{ for a.a. }t\in [0,T],\\
\label{eq:H1estimate}
	\operatorname*{ess\,sup}_{0\leq t \leq T}\|\Psi(t)\|_{H^1}^2 &\leq C_1(\|\Psi_0\|_{L^2}^2+\|F\|_{Y}^2),\\
\label{eq:estimate-2}
	\|\Psi\|_X^2 &\leq C' \Bigl( \| F \|_Y^2 + \| \Psi_0 \|_{L^2}^2  +  \bigl( \| F \|_Y^2 + \| \Psi_0 \|_{L^2}^2 \bigr)^2 \Bigr) ,\\
\label{eq:estimate-3}
\| \Psi' \|_{X^*}^2 &\leq C'' \bigl(1+ \| F \|_{Y}^2 + \| \Psi_0 \|_{L^2}^2 \bigr)^3.
\end{align}
The same estimates hold for a $\Psi \in W$ solving \eqref{eq:KSweak}.
\end{theorem}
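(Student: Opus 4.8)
The plan is to extract all five bounds from the Galerkin identity \eqref{eq:WeakFormHilbertSpace} by testing with the two elements of $P_m$ that are available, namely $\Psi(t)$ itself and $\partial_t\Psi(t)$ (the latter lies in $P_m$ because $\Psi_m(t)=\sum_{k=1}^m d_m^k(t)\Phi_k$ is a finite combination of the fixed basis functions, so for each $m$ the map $t\mapsto\Psi_m(t)$ is absolutely continuous and every time differentiation below is legitimate), then taking real and imaginary parts and invoking the four inequalities of Lemma \ref{lem:bound-weak-form} together with Grönwall's and Young's inequalities. The constants produced are independent of $m$, so the same bounds survive for a genuine $W$-solution of \eqref{eq:KSweak} after passing to the weak limit. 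I would first prove \eqref{eq:estimate-1}: testing with $\Phi=\Psi(t)$ and taking the imaginary part leaves $\tfrac12\frac{d}{dt}\|\Psi\|_{L^2}^2$ on the left, annihilates the nonlinear term because $\scalar{V(\Psi)\Psi}{\Psi}=\int_\Omega V\rho\,\dd x$ is real, controls $\Im B(\Psi,\Psi;u)$ by $c_0\|\Psi\|_{L^2}^2$ through \eqref{eq:est1im}, and controls $\Im\scalar{F}{\Psi}$ by Young's inequality; Grönwall then yields \eqref{eq:estimate-1}.

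The core is the energy estimate behind \eqref{eq:Bestimate} and \eqref{eq:H1estimate}. Here I would test with $\Phi=\partial_t\Psi$: the left-hand side $i\|\partial_t\Psi\|_{L^2}^2$ is purely imaginary, so its real part vanishes and I obtain an identity for $\Re B(\Psi,\partial_t\Psi;u)$. The Hermitian pieces of $B$ (the gradient, $V_{ext}$ and $V(\Lambda)$ terms) reassemble into $\tfrac12\frac{d}{dt}$ of the corresponding quadratic form plus a remainder carrying $\dot u$ and $\partial_t\Lambda$, while for $\alpha=1$ the nonlinear term becomes $\tfrac12\frac{d}{dt}$ of a potential energy, using $\partial_t\rho=2\Re\scalarC{\Psi}{\partial_t\Psi}$ and the fact that $V_{xc}=\partial_\rho\mathcal F_{xc}$ admits a primitive with $(\partial_\rho V_{xc})\rho$ bounded (Assumption \ref{assumptions} (\ref{assumptionVxcDiffable})). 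Writing $\mathcal E(t):=\Re B(\Psi,\Psi;u)+\tfrac{\alpha}{2}\int_\Omega\big(\tfrac12 V_H\rho+\mathcal F_{xc}(\rho)\big)\dd x$ for the total energy, these computations give a differential inequality of the form $\frac{d}{dt}\mathcal E\le C\big(1+|\dot u(t)|\big)\|\Psi\|_{L^2}^2+(\Lambda\text{- and }F\text{-remainders})$; combined with \eqref{eq:estimate-1} and $\|\dot u\|_{L^1}\le\sqrt T\,\|u\|_{H^1}$, integration and a further Grönwall argument bound $\mathcal E(t)$, hence $\Re B$ and (via \eqref{eq:est1im}) $|B|$, which is \eqref{eq:Bestimate}; then \eqref{eq:H1estimate} is immediate from the coercivity estimate \eqref{eq:est2} together with \eqref{eq:estimate-1}.

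Estimate \eqref{eq:estimate-2} follows by integrating \eqref{eq:est2} over $(0,T)$ and controlling $\int_0^T\Re B\,\dd t$ through the same potential-energy bookkeeping; the Hartree contribution $\int_0^T\!\!\int_\Omega\!\!\int_\Omega |x-y|^{-1}\rho(x)\rho(y)\,\dd y\,\dd x\,\dd t\lesssim\int_0^T\|\Psi\|_{L^2}^4\,\dd t$ is what generates the quadratic term $(\|F\|_Y^2+\|\Psi_0\|_{L^2}^2)^2$ once \eqref{eq:estimate-1} is inserted, the remaining contributions being linear in the data. Finally \eqref{eq:estimate-3} is obtained by reading \eqref{eq:WeakFormHilbertSpace} as an equation for $\partial_t\Psi$ in $H^{-1}$: for $\Phi\in H^1_0$ with $\|\Phi\|_{H^1}\le1$ one may discard the component of $\Phi$ orthogonal to $P_m$ (legitimate since $\{\Phi_k\}$ is orthogonal for both $L^2$ and $H^1_0$) and bound the right-hand side by $c_1\|\Psi\|_{H^1}+\|V(\Psi)\Psi\|_{L^2}+\|F\|_{L^2}$ via \eqref{eq:est1}, where Lemma \ref{CancesLemma} and the boundedness of $V_c$ yield $\|V(\Psi)\Psi\|_{L^2}\lesssim\|\Psi\|_{H^1}^3+\|\Psi\|_{L^2}$; squaring, integrating in time and feeding in \eqref{eq:estimate-2} and the $H^1$ bound produces the cubic power.

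I expect the main obstacle to be the non-Hermitian term $(1-\alpha)D(\Psi,\Phi)$ that distinguishes the adjoint problem. Unlike the self-adjoint pieces of $B$, $\Re D(\Psi,\partial_t\Psi)$ is not simply half the time derivative of $D(\Psi,\Psi)$, because $D$ is built from the real pairing $\Re\scalarC{\Psi}{\Lambda}$ and from $\Lambda$ itself; making the energy method close therefore requires rewriting $\Re D(\Psi,\partial_t\Psi)$ as a genuine time derivative plus remainders in which the derivative falls on $\Lambda$, and controlling those remainders through the regularity $\Lambda\in L^2(0,T;H^2)\cap L^\infty$ available from Theorem \ref{thm:ImprovedRegularity}. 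A secondary difficulty is the forcing contribution $\Re\scalar{F}{\partial_t\Psi}$ appearing in the energy identity, which cannot be absorbed for a merely $L^2$-in-time $F$ and must be integrated by parts in time using the extra regularity $F$ enjoys in the adjoint setting, whereas for the forward equation $F\equiv0$ and the difficulty disappears.
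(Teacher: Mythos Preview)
Your treatment of \eqref{eq:estimate-1} and \eqref{eq:estimate-3} is essentially the paper's. The gap is in how you attack \eqref{eq:Bestimate} and \eqref{eq:H1estimate}.

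You propose to test with $\partial_t\Psi$, take the real part, and run an energy-derivative argument. As you yourself flag, this forces you to deal with $\Re\scalar{F}{\partial_t\Psi}$, which for $F\in Y=L^2(0,T;L^2)$ cannot be absorbed, and with $\Re D(\Psi,\partial_t\Psi)$, which is not a clean time derivative. Your proposed fixes (integrate by parts in $t$ on $F$, use $\partial_t\Lambda$-regularity for $D$) rely on hypotheses that are \emph{not} part of the theorem: $F$ is only in $Y$, and the statement is meant to cover nonzero $F$ in the forward case as well. So as written, the argument does not close under the stated assumptions.

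The paper avoids all of this by never testing with $\partial_t\Psi$ for these two estimates. It simply reuses the identity you already derived for \eqref{eq:estimate-1}, namely $i\tfrac12\frac{d}{dt}\|\Psi\|_{L^2}^2 = B(\Psi,\Psi;u)+\alpha\scalar{V(\Psi)\Psi}{\Psi}+\scalar{F}{\Psi}$, and takes its \emph{real} part. The left side is purely imaginary, so
\[
\Re B(\Psi,\Psi;u) = -\alpha\scalar{V(\Psi)\Psi}{\Psi} - \Re\scalar{F}{\Psi}.
\]
Now $V_H\ge 0$ makes the Hartree contribution have the favourable sign, and Assumption~\ref{assumptions} (\ref{assumptionVcBounded}), (\ref{assumptionVxLipschitz}) bound the $V_c,V_x$ contributions by $C\|\Psi\|_{L^2}^2$. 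Together with \eqref{eq:est1im} for the imaginary part and the $L^2$ bound \eqref{eq:estimate-1} this gives \eqref{eq:Bestimate} directly, and then \eqref{eq:H1estimate} follows from the coercivity \eqref{eq:est2}. Estimate \eqref{eq:estimate-2} is obtained in the same spirit: add the real and imaginary parts of the tested equation, add $c_3\|\Psi\|_{L^2}^2$, apply \eqref{eq:est2}, and integrate in time; the quadratic dependence on the data enters only through the Hartree bound \eqref{eq:bound_potVH} combined with the already-established $H^1$ control. No time-derivative of any energy is needed, and the obstacles you anticipate never arise.
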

\begin{proof}
$\;$ \\
\underline{Estimate 1}\\
Testing \eqref{eq:WeakFormHilbertSpace} with $\Psi(\cdot,t)$, we obtain
\begin{equation}
\label{eq:KSweak_test2}
i \scalar{ \partial_t \Psi }{ \Psi }
= B(\Psi,\Psi;u) + \alpha \scalar{ V(\Psi) \Psi }{ \Psi }
+ \scalar{ F }{ \Psi }  ,
\end{equation}
a.e. in $(0,T)$.
This equation is equivalent to (see e.g. \cite{Evans2010})
\begin{equation}
\label{eq:KSweak_test3}
i \frac{1}{2} \abl{}{t} \| \Psi(t) \|_{L^2}^2
= B(\Psi,\Psi;u) + \alpha \scalar{ V(\Psi) \Psi }{ \Psi }
+ \scalar{ F }{ \Psi }  .
\end{equation}
Now, we notice that the left-hand side is purely imaginary,
while the terms \linebreak
$\scalar{ V(\Psi) \Psi }{ \Psi }$ and $B(\Psi,\Psi;u)$ apart from $D(\Psi, \Psi)$ 
are purely real.
Consequently, by splitting \eqref{eq:KSweak_test3} into
real and imaginary parts, we obtain the following
\begin{equation}
\label{eq:KSweak_real}
\frac{1}{2} \abl{}{t} \| \Psi(t) \|_{L^2}^2
= \Im \bigl( \scalar{ F }{ \Psi }+(1-\alpha)D(\Psi,\Psi) \bigr)  ,
\end{equation}
and
\begin{equation}
\label{eq:KSweak_imag}
\Re B(\Psi,\Psi;u) + \alpha \scalar{ V(\Psi) \Psi }{ \Psi }
+ \Re \bigl( \scalar{ F }{ \Psi } \bigr) = 0  .
\end{equation}

Now, using Lemma \ref{lem:bound-weak-form} and defining $\tilde{c}_0:=(1-\alpha)c_0$, equation \eqref{eq:KSweak_real} becomes as follows
\begin{equation}
\label{eq:KSweak_real2}
\begin{split}
\abl{}{t} \| \Psi(t) \|_{L^2}^2
&\leq 2 \| F \|_{L^2} \| \Psi \|_{L^2} +2\tilde{c}_0 \|\Psi\|_{L^2}^2\\
&\leq \| F \|_{L^2}^2 +  (1+2\tilde{c}_0) \| \Psi \|_{L^2}^2  .
\end{split}
\end{equation}
By defining $\eta(t) := \| \Psi(t) \|_{L^2}^2$
and $\xi(t) := \| F(t) \|_{L^2}^2$ the previous inequality becomes as follows
\begin{equation}
\eta'(t) \leq (1+2\tilde{c}_0)\eta(t) + \xi(t)  ,
\end{equation}
a.e. in $(0,T)$. Thus, by applying the Gronwall inequality \cite{Evans2010} in the differential
form, we obtain the following
\begin{equation}
\label{eq:afterGronw}
\eta(t) \leq e^{(1+2\tilde{c}_0)t} \Bigl( \eta(0) + \int_0^t \xi(s) \dd s \Bigr)  .
\end{equation}
Notice that by \eqref{eq:initial}, it holds that
$\eta(0) =  \| \Psi(0) \|_{L^2}^2 = \| \Psi_0 \|_{L^2}^2$.
Consequently, by using \eqref{eq:afterGronw}, we know that there exists
a positive constant $C$ such that the following estimate holds
\begin{equation}
\label{eq:estimate-A}
\max_{0\leq t \leq T} \| \Psi(t) \|_{L^2}^2
\leq C \Bigl( \| \Psi_0 \|_{L^2}^2 + \| F \|_Y^2 \Bigr)  .
\end{equation}

For $\Psi\in W$, we have the continuous embedding $W \hookrightarrow C([0,T];L^2(\Omega; \C^N))$, see, e.g. \cite[p. 287]{Evans2010}.
 With this, we can evaluate $\Psi$ at time $t$ and find the same estimate if $\Psi\in W$ solves \eqref{eq:KSweak}.

$ $

\underline{Estimate 2}\\
	Taking the real part of \eqref{eq:KSweak_test3}, we find
	\begin{align*}
		\Re B(\Psi, \Psi; u)+\alpha \scalar{V(\Psi)\Psi}{\Psi}+\Re\scalar{F}{\Psi}=0.
	\end{align*}
	Using that $V_H\geq 0$ and $V_c\in L^\infty(\Omega)$, we get
	\begin{align*}	
		\Re B(\Psi, \Psi; u)&=\alpha\bigl(-\scalar{V_H(\Psi)\Psi}{\Psi}-\scalar{V_x(\Psi)\Psi}{\Psi}-\scalar{V_c(\Psi)\Psi}{\Psi}\bigr)-\Re\scalar{F}{\Psi}\\
		&\leq |\scalar{V_x(\Psi)\Psi}{\Psi}|+|\scalar{V_c(\Psi)\Psi}{\Psi}|+|\Re\scalar{F}{\Psi}|\\
		&\leq |\scalar{V_x(\Psi)\Psi}{\Psi}|+C_{V_c}\|\Psi\|_{L^2}^2+ \|F\|_{L^2}^2+\|\Psi\|_{L^2}^2.
	\end{align*}
	From the assumption \ref{assumptions} (\ref{assumptionVxLipschitz})
	and using \eqref{eq:estimate-1}, we obtain the following
	\begin{align*}
		\Re B(\Psi, \Psi; u)\leq C  \|\Psi\|_{L^2}^2+\|F\|_{L^2}^2
		\leq C_0' (\|\Psi_0\|_{L^2}^2+\|F\|_{Y}^2).
	\end{align*}
	By Lemma \ref{lem:bound-weak-form}, it holds that $\Im B(\Psi, \Psi; u)\leq c_0 \|\Psi\|_{L^2}^2$. Combining these two estimates one concludes \eqref{eq:Bestimate}.
	
	As for the first estimate, the same applies in the case when $\Psi\in W$ solves \eqref{eq:KSweak}.
	
$ $
	
\underline{Estimate 3}\\
	For the second bound, we simply combine Lemma \ref{lem:bound-weak-form} with \eqref{eq:Bestimate} and \eqref{eq:estimate-1}. We have
	\begin{align*}
		\|\Psi(t)\|_{H^1}^2 &\leq \Re B(\Psi(t), \Psi(t); u(t))+c_3\|\Psi(t)\|_{L^2}^2\\ 
		&{\leq}C_0(\|\Psi_0\|_{L^2}^2+\|F\|_{Y}^2)+c_3\|\Psi(t)\|_{L^2}^2\\
		&{\leq}(C_0+c_3C)(\|\Psi_0\|_{L^2}^2+\|F\|_Y^2). 
	\end{align*}
	
	If $\Psi \in W$, one has to use the fact that given $u_k\rightharpoonup u$ in $L^2(0,T; H^1_0(\Omega))$ with the uniform bound $\operatorname*{ess\,sup}_{0\leq t \leq T} \|u_k(t)\|\leq C$ it follows that $\operatorname*{ess\,sup}_{0\leq t \leq T} \|u(t)\|\leq C$. With this fact, we obtain
	\begin{align*}
	\operatorname*{ess\,sup}_{0\leq t \leq T}\|\Psi(t)\|_{H^1}^2 
			&\leq C_0(\|\Psi_0\|_{L^2}^2+\|F\|_{Y}^2)+c_3C(\|\Psi_0\|_{L^2}^2+\|F\|_Y^2).
	\end{align*}
	
$ $
	
\underline{Estimate 4}\\
First, we need an adequate bound for the term $\scalar{ V(\Psi) \Psi }{ \Phi }$
for any $\Phi \in L^2(\Omega;\C^N)$.
For this reason,
we write the following
\begin{equation}
\label{eq:bound_pot}
\begin{split}
\scalar{ V(\Psi) \Psi }{ \Psi }
=\scalar{ V_H(\Psi) \Psi }{ \Psi }+\scalar{ V_x(\Psi) \Psi }{ \Psi }+\scalar{ V_c(\Psi) \Psi }{ \Psi }.
\end{split}
\end{equation}
To bound $V_H$, we use the Cauchy-Schwarz inequality, Lemma \ref{CancesLemma}, and \eqref{eq:H1estimate} to arrive at
\begin{align}\label{eq:bound_potVH}
	\scalar{V_H(\Psi)\Psi}{\Psi} &\leq C_u\|\Psi\|_{H^1}^2 \|\Psi\|_{L^2}^2
	\leq C_1(\|\Psi_0\|_{L^2}^2+\|F\|_{Y}^2)\|\Psi\|_{L^2}^2.
\end{align}
Next, we recall that $x \mapsto V_c(x,\cdot)$ is bounded (Assumption \ref{assumptions} (\ref{assumptionVcBounded})) and $V_x$ is Lipschitz continuous (Assumption \ref{assumptions} (\ref{assumptionVxLipschitz})).
Consequently, from \eqref{eq:bound_pot}, it follows that there exists
a positive constant $K'$ such that the following holds
\begin{equation}
\label{eq:bound_pot2}
\scalar{ V(\Psi) \Psi }{ \Psi } \leq K' \| \Psi \|_{L^2}^2 ,
\end{equation}
where $K'$ depends on $\|\Psi_0\|_L^2$ and $\|F\|_Y$.

By summing term-by-term \eqref{eq:KSweak_real} with \eqref{eq:KSweak_imag},
we get the following
\begin{equation}
\label{eq:KSweak_new}
\begin{split}
\frac{1}{2} \abl{}{t} \| \Psi \|_{L^2}^2 + \Re B(\Psi,\Psi;u)
= &\Im  \scalar{ F }{ \Psi } +(1-\alpha)\Im D(\Psi,\Psi) \\
&- \alpha\scalar{ V(\Psi) \Psi }{ \Psi }- \Re  \scalar{ F }{ \Psi } .
\end{split}
\end{equation}
Adding to both sides the term $c_3 \| \Psi \|_{L^2}^2$,
where $c_3$ is the same as in Lemma \ref{lem:bound-weak-form},
we obtain the following
\begin{equation}
\label{eq:KSweak_new2}
\begin{split}
&\frac{1}{2} \abl{}{t} \| \Psi(t) \|_{L^2}^2 + \Re B(\Psi,\Psi;u)
+ c_3 \| \Psi \|_{L^2}^2\\
&= \Im \scalar{ F }{ \Psi } +(1-\alpha)\Im D(\Psi,\Psi)
+ c_3 \| \Psi \|_{L^2}^2
- \alpha\scalar{ V(\Psi) \Psi }{ \Psi }
- \Re \scalar{ F }{ \Psi }  .
\end{split}
\end{equation}
Next, by applying Lemma \ref{lem:bound-weak-form} and using \eqref{eq:bound_pot2}
we get the following
\begin{equation}
\label{eq:KSweak_new3}
\frac{1}{2} \abl{}{t} \| \Psi(t) \|_{L^2}^2 + 
 \| \Psi \|_{H^1}^2
\leq \| F \|_{L^2}^2 + ( 1 + c_0 + c_3 + K' ) \| \Psi \|_{L^2}^2  .
\end{equation}
By manipulating \eqref{eq:KSweak_new3} and integrating over $(0,T)$, we have
\begin{equation}
\label{eq:KSweak_new4}
\int_0^T \| \Psi \|_{H^1}^2 \dd t
\leq \int_0^T \| F \|_{L^2}^2 + ( 1 + c_0 + c_3 + K' ) \| \Psi \|_{L^2}^2 \dd t
- \int_0^T \frac{1}{2} \abl{}{t} \| \Psi(t) \|_{L^2}^2 \dd t  ,
\end{equation}
which implies that
\begin{equation}
\label{eq:KSweak_new5}
\begin{split}
 \| \Psi \|_X^2
&\leq \| F \|_Y^2
+ ( 1 + c_0 + c_3 + K' ) C T\Bigl(\| \Psi_0 \|_{L^2}^2 + \| F \|_Y^2 \Bigr)
+ \frac{1}{2} \Bigl( \| \Psi_0 \|_{L^2}^2 - \| \Psi(T) \|_{L^2}^2 \Bigr) \\
&\leq \| F \|_Y^2
+ ( 1 + c_0 + c_3 + K' ) C T\Bigl(  \| \Psi_0 \|_{L^2}^2 + \| F \|_Y^2 \Bigr)
+ \frac{1}{2} \| \Psi_0 \|_{L^2}^2 \\
&= \bigl(1+( 1 + c_0 + c_3 + K' ) TC\bigr) \| F \|_Y^2
+ \left( ( 1 + c_0 + c_3 + K' ) C T + \frac{1}{2} \right) \| \Psi_0 \|_{L^2}^2  ,
\end{split}
\end{equation}
where  we used \eqref{eq:estimate-1}.
Using the dependence of $K'$ on the data, the previous \eqref{eq:KSweak_new5} implies that there exists a
positive constant $C'$ such that
\begin{equation}\label{eq:KSweak_new6}
	\|\Psi\|_X^2\leq C' \Bigl( \| F \|_Y^2 + \| \Psi_0 \|_{L^2}^2  +  \bigl( \| F \|_Y^2 + \| \Psi_0 \|_{L^2}^2 \bigr)^2 \Bigr).
\end{equation}
The same calculation can be done for $\Psi\in W$ being a solution of \eqref{eq:KSweak}.
$\;$ \\

\underline{Estimate 5}\\
Fix any $v \in H^1_0(\Omega;\C^N)$, with $\| v \|_{H^1} \leq 1$.
Write $v = v_1 + v_2$, where $v_1 \in \Span\{\phi_k\}_{k=1}^{m}$
and $\scalar{v_2}{\phi_k}=0$ for $k=1,\dots,m$.
Since the functions $\{\phi_k\}_{k=1}^{\infty}$ are orthogonal
in $H^1_0(\Omega)$, we have
\begin{equation}
\label{eq:norm-leq-1}
1 \ge \| v \|_{H^1}^2 = \scalarHOne{v_1+v_2}{v_1+v_2}
= \| v_1 \|_{H^1}^2 + \| v_2 \|_{H^1}^2 \geq \| v_1 \|_{H^1}^2 .
\end{equation}
Next, utilizing \eqref{eq:KSweak} with $\Psi\in W_m$, we obtain
\begin{equation}
\label{eq:KSweak_test_boo}
i \scalar{ \partial_t \Psi }{ v_1 }
= B(\Psi,v_1;u) + \alpha\scalar{ V(\Psi) \Psi }{ v_1 }
+ \scalar{ F }{ v_1 } 
\end{equation}
a.e. in $[0, T]$. Using the decomposition of $v$, this implies that
\begin{equation}\label{eq:Hm1normsplit}
\begin{split}
|\langle \Psi' , v \rangle | &= |\scalar{ \partial_t \Psi }{ v }| \\
&= |\scalar{ \partial_t \Psi }{ v_1 }| \\
&= | B(\Psi,v_1;u) + \alpha\scalar{ V(\Psi) \Psi }{ v_1 }
+ \scalar{ F }{ v_1 } |  ,
\end{split}
\end{equation}
where $\partial_t \Psi \in L^2((0,T);H^1_0(\Omega))$ is the Riesz representative
of $\Psi' \in L^2((0,T);H^{-1}(\Omega))$ and 
$\dualP{ \cdot }{ \cdot } : H^{-1}(\Omega) \times H^{1}_0(\Omega) \rightarrow \C$ denotes the dual pairing for $H^1_0(\Omega)$ and its dual $H^{-1}(\Omega)$.

By using the Cauchy-Schwarz inequality and Assumptions \ref{assumptions} (\ref{assumptionVcBounded}) and (\ref{assumptionVxLipschitz}) and $\|v_1\|_{H^1}\leq 1$, we have that there exists a positive constant $\tilde{K}$ such that
\begin{align*}
	|\scalar{ (V_x(\Psi)+V_c(\Psi)) \Psi }{ v_1 }| \leq \|(V_x(\Psi)+V_c(\Psi)) \Psi\|_{L^2}\|v_1\|_{L^2}\leq \tilde{K}\|\Psi\|_{L^2}.
\end{align*}

By recalling Lemma \ref{lem:bound-weak-form}, \eqref{eq:bound_potVH} and $\| v \|_{H^1} \leq 1$, 
we obtain that there exists a positive constant $\tilde{C}$ such that
\begin{equation}
\label{eq:booo}
|\langle \Psi' , v \rangle |
\leq \tilde{C}(1+\|\Psi_0\|_{L^2}^2+\|F\|_Y^2) \bigl( \| F \|_{L^2} + \| \Psi \|_{H^1} \bigr)  ,
\end{equation}
and from \eqref{eq:booo}, we have the following
\begin{equation}
\label{eq:booo2}
\| \Psi' \|_{H^{-1}}=\sup_{0\neq v\in H_0^1(\Omega)}\frac{|\langle \Psi', v\rangle|}{\|v\|_{H^1}}
\leq \tilde{C}(1+\|\Psi_0\|_{L^2}^2+\|F\|_Y^2) \bigl( \| F \|_{L^2} + \| \Psi \|_{H^1} \bigr)  .
\end{equation}
This implies that
\begin{align*}
	\|\Psi'\|_{H^{-1}}^2 \leq \tilde{C}^2(1+\|\Psi_0\|_{L^2}^2+\|F\|_Y^2)^2 \bigl( \| F \|_{L^2} + \| \Psi \|_{H^1} \bigr)^2\\
	\leq 2\tilde{C}^2(1+\|\Psi_0\|_{L^2}^2+\|F\|_Y^2)^2\bigl( \| F \|_{L^2}^2 + \| \Psi \|_{H^1}^2 \bigr).
\end{align*}
By integrating over $(0,T)$ and using \eqref{eq:estimate-2},
we obtain that there exists a positive constant $C''$ such that
the following estimate holds
\begin{equation}
\| \Psi' \|_{X^*}^2
\leq C'' \bigl(1+ \| F \|_{Y}^2 + \| \Psi_0 \|_{L^2}^2 \bigr)^3  ,
\end{equation}
where $X^* = L^2((0,T);H^{-1}(\Omega))$ and the proof for $\Psi\in W_m$ is completed.

For $\Psi\in W$, no decomposition is necessary in \eqref{eq:Hm1normsplit}, so we can use $v_1=v, v_2=0$ and apply the same estimates to conclude our proof.
\end{proof}

\section{Existence of a weak solution}\label{sec:ExistenceSolution}
In the preceding section, we have shown the estimates in Theorem \ref{thm:estimates} for solutions  $\Psi_m\in W_m$ in the Galerkin subspace. In this section, we use these estimates to show the existence of a solution in the full Sobolev space $W$. To this end, we make use of the following embedding theorem by Lions \cite[1.5.2]{Lions1969}.
\begin{lemma}
	\label{lem:Lions}
	Given three Banach spaces $B_0 \Subset B \hookrightarrow B_1$ with $B_0$, $B_1$ reflexive and the embedding $B_0 \hookrightarrow B$ being compact,
	then the space
	\begin{align*}
		V&=\left\{v| v\in L^p((0,T), B_0), v'\in L^q((0,T), B_1) \right\}, \quad 1<p,q<\infty,\\
		\|v\|_V&:=\|v\|_{L^p((0,T), B_0)}  +\|v'\|_{L^q((0,T), B_1)}
	\end{align*}
	is compactly embedded in $L^p(0,T; B)$.
\end{lemma}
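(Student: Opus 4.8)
The plan is to recognise this as the Aubin--Lions--Simon compactness lemma and to prove it by showing that every bounded sequence $\{v_n\}$ in $V$ admits a subsequence converging strongly in $L^p(0,T;B)$. Two ingredients drive the argument. The first is an Ehrling-type interpolation inequality: since the embedding $B_0\hookrightarrow B$ is \emph{compact} and $B\hookrightarrow B_1$ is continuous, for every $\eta>0$ there is a constant $C_\eta$ with $\|w\|_B\le \eta\|w\|_{B_0}+C_\eta\|w\|_{B_1}$ for all $w\in B_0$; I would prove this by contradiction, extracting from a hypothetical violating sequence a $B$-convergent subsequence by means of the compact embedding. The second ingredient is a vector-valued Fréchet--Kolmogorov (Simon) criterion: a bounded family in $L^p(0,T;B)$ with $1\le p<\infty$ is relatively compact provided (i) the time-averages $\int_{t_1}^{t_2}v(t)\,\dd t$ form a relatively compact set in $B$ for all $0<t_1<t_2<T$, and (ii) the translates are uniformly small, i.e. $\sup_n\|v_n(\cdot+h)-v_n\|_{L^p(0,T-h;B)}\to 0$ as $h\to0^+$.

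Next I would verify the two conditions. For (i), Hölder's inequality gives $\|\int_{t_1}^{t_2}v_n\,\dd t\|_{B_0}\le (t_2-t_1)^{1/p'}\|v_n\|_{L^p(0,T;B_0)}$, so the averages are bounded in $B_0$; the compact embedding $B_0\Subset B$ then yields relative compactness in $B$. For (ii), I would first work in the weak space $B_1$: since $v_n\in L^p(0,T;B_0)$ with $v_n'\in L^q(0,T;B_1)\subset L^1(0,T;B_1)$, the function $v_n$ has an absolutely continuous $B_1$-valued representative and $v_n(t+h)-v_n(t)=\int_t^{t+h}v_n'(s)\,\dd s$ in $B_1$. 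Hölder in $s$ together with Fubini then bound $\|v_n(\cdot+h)-v_n\|_{L^p(0,T-h;B_1)}$ by $C\,h^{1/q'}\|v_n'\|_{L^q(0,T;B_1)}$, which tends to $0$ uniformly in $n$ because $1<q<\infty$. Finally I would upgrade this from $B_1$ to $B$ via the Ehrling inequality applied to $w=v_n(\cdot+h)-v_n$: the $B_0$-term is absorbed by $\eta$ times the uniform $L^p(0,T;B_0)$ bound, while the $B_1$-term vanishes as $h\to0$; letting $\eta\to0$ gives (ii).

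The main obstacle is precisely condition (ii): the derivative bound lives only in the weakest space $B_1$, where there is no compactness, so it cannot by itself produce equicontinuity in the middle space $B$. The Ehrling inequality is the device that transfers smallness from $B_1$ to $B$ at the cost of an arbitrarily small multiple of the (uniformly bounded) $B_0$-norm, and this interplay is the heart of the argument. A related subtlety I would be careful about is that a bound in $L^p(0,T;B_0)$ does \emph{not} confer pointwise-in-time control in $B_0$, which is why I favour the averaged Fréchet--Kolmogorov criterion over an Arzelà--Ascoli argument in $C([0,T];B_1)$. The reflexivity of $B_0,B_1$ together with $1<p,q<\infty$ guarantees that the relevant Bochner spaces are reflexive and that bounded sequences possess weakly convergent subsequences, which I would use to identify the strong limit in $L^p(0,T;B)$ with the weak limit in $L^p(0,T;B_0)$.
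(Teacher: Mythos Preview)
Your outline is correct and follows the standard Aubin--Lions--Simon route (Ehrling interpolation plus Simon's $L^p$-Kolmogorov criterion). However, there is nothing to compare it against: the paper does not prove this lemma at all but merely quotes it as a known result of Lions, citing \cite[1.5.2]{Lions1969}. So you have supplied a genuine proof where the paper only gives a reference; your argument is essentially the one found in the standard sources (Lions, Simon), and each step you describe is sound.
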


\begin{theorem}
Problem \eqref{eq:KSweak} admits a weak solution, i.e. there exists a $\Psi\in W$ such that
\begin{equation}
\begin{split}
&i \scalar{ \partial_t \Psi}{ \Phi }
= B(\Psi,\Phi;u) + \alpha\scalar{ V(\rho) \Psi}{ \Phi } + \scalar{ F }{ \Phi } \\
&\text{a.e. in } (0,T), \; \forall \Phi \in H^1_0(\Omega;\C^N),\\
&\Psi_0\in L^2(\Omega;\C^N) .
\end{split}
\end{equation} 
\end{theorem}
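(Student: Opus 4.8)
The plan is to construct the solution as a weak limit of the Galerkin approximations $\Psi_m$ whose existence was established in Theorem~\ref{thm:approxS}. The uniform energy estimates of Theorem~\ref{thm:estimates}, in particular \eqref{eq:estimate-2} and \eqref{eq:estimate-3}, show that $\{\Psi_m\}_m$ is bounded in $X = L^2(0,T;H^1_0(\Omega;\C^N))$ and $\{\Psi_m'\}_m$ is bounded in $X^*$. Since $X$ is reflexive and $X^*$ is the dual of a reflexive space, I would extract a subsequence (not relabeled) such that $\Psi_m \rightharpoonup \Psi$ weakly in $X$ and $\Psi_m' \rightharpoonup \Psi'$ weakly in $X^*$; in particular the limit $\Psi$ lies in $W$. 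To handle the nonlinear term I would additionally invoke the Aubin--Lions lemma (Lemma~\ref{lem:Lions}) with $B_0 = H^1_0(\Omega;\C^N)$, $B = L^2(\Omega;\C^N)$, $B_1 = H^{-1}(\Omega;\C^N)$: the compactness of $H^1_0 \hookrightarrow L^2$ gives strong convergence $\Psi_m \to \Psi$ in $Y = L^2(0,T;L^2(\Omega;\C^N))$.

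Next I would pass to the limit in the weak formulation \eqref{eq:KSweak_test}. Fix an index $j$ and a smooth scalar test function $\chi \in C^1_c(0,T)$; multiplying \eqref{eq:KSweak_test} by $\chi$ and integrating over $(0,T)$ gives an identity for each $m \geq j$. The linear terms converge under the weak convergence $\Psi_m \rightharpoonup \Psi$ in $X$ and $\Psi_m' \rightharpoonup \Psi'$ in $X^*$, using that $B(\cdot,\Phi_j;u)$ is a bounded linear functional on $H^1_0$ by the continuity estimate \eqref{eq:est1} of Lemma~\ref{lem:bound-weak-form}; the inhomogeneity term is linear and fixed. For the nonlinear term $\alpha\scalar{V(\Psi_m)\Psi_m}{\Phi_j}$ I would combine the strong $L^2$-convergence $\Psi_m \to \Psi$ in $Y$ with the continuity of the map $\Psi \mapsto V(\Psi)\Psi$ from $L^2$ to $L^2$ established through Lemma~\ref{lem:VContinuousFunction} and the Lipschitz bounds of Lemma~\ref{CancesLemma} and Assumption~\ref{assumptions}~(\ref{assumptionVcBounded})--(\ref{assumptionVxLipschitz}); this yields $V(\Psi_m)\Psi_m \to V(\Psi)\Psi$ in $Y$ and hence convergence of the pairing. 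The resulting identity holds for all finite linear combinations of the basis functions $\Phi_k$, and since $\{\Phi_k\}_k$ is dense in $H^1_0(\Omega;\C^N)$, it extends by the continuity estimate \eqref{eq:est1} to arbitrary $\Phi \in H^1_0(\Omega;\C^N)$, giving the weak equation a.e.\ in $(0,T)$.

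Finally I would verify the initial condition $\Psi(0) = \Psi_0$. This uses the continuous embedding $W \hookrightarrow C([0,T];L^2(\Omega;\C^N))$, which makes point evaluation $\Psi(0)$ well defined. Integrating the weak equation against a test function $\chi \in C^1([0,T])$ with $\chi(T)=0$ and integrating the time derivative by parts produces a boundary term at $t=0$; comparing the limiting identity with the one obtained before passing to the limit — where the Galerkin initial data satisfy $\Psi_m(0) = \sum_{k=1}^m \scalar{\Psi_0}{\Phi_k}\Phi_k \to \Psi_0$ in $L^2$ by \eqref{eq:initial} — identifies $\Psi(0)$ with $\Psi_0$.

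The main obstacle I anticipate is the passage to the limit in the nonlinear term, since weak convergence in $X$ alone is insufficient to control $V(\Psi_m)\Psi_m$. Everything hinges on upgrading to strong convergence in $Y$ via Aubin--Lions, and then on the continuity of $\Psi \mapsto V(\Psi)\Psi$ as a map $L^2 \to L^2$; the Hartree part in particular requires the quadratic $H^1$-dependence in Lemma~\ref{CancesLemma}, so one must carefully exploit the uniform $H^1$-bound \eqref{eq:H1estimate} to keep the relevant constants finite along the sequence.
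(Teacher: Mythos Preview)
Your proposal is correct and follows essentially the same route as the paper: extract weakly convergent subsequences from the uniform bounds \eqref{eq:estimate-2}--\eqref{eq:estimate-3}, upgrade to strong convergence in $Y$ via Lemma~\ref{lem:Lions}, pass to the limit in \eqref{eq:KSweak_test} using the $L^2$-continuity of $V$ from Lemma~\ref{lem:VContinuousFunction}, extend by density, and recover the initial datum through an integration-by-parts argument comparing boundary terms. The only cosmetic difference is that the paper tests against finite combinations $\Phi(x,t)=\sum_{k=1}^M d^k(t)\Phi_k(x)$ rather than a single $\chi(t)\Phi_j(x)$, which amounts to the same density argument.
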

\begin{proof}
Consider a sequence $\{\Psi_m\}_{m=1}^{\infty}$ of solutions of the Galerkin problem \eqref{eq:WeakFormHilbertSpace}, then according to the estimates
\eqref{eq:estimate-1}, \eqref{eq:estimate-2}, and \eqref{eq:estimate-3}
in Theorem \ref{thm:estimates}, the sequence
 is bounded in $X$ and 
$\{\Psi_m'\}_{m=1}^{\infty}$ is bounded in $X^*$.
Consequently, there exists a subsequence $\{ \Psi_{m_l} \}_{l=1}^{\infty}$
and a function $\Psi \in X$ with $\Psi' \in X^*$ such that
$\Psi_{m_l} \rightharpoonup \Psi$ in $X$ and
$\Psi_{m_l}' \rightharpoonup \Psi'$ in $X^*$; see, e.g., \cite{Evans2010}.
Moreover, by Lions' theorem (Lemma \ref{lem:Lions}) we know that $W$
is compactly embedded in $Y:=L^2(0,T;L^2(\Omega))$,
consequently, we have strong convergence of the subsequence
$\Psi_{m_l} \rightarrow \Psi$ in $Y$.

Next, we fix a positive integer $M$ and construct a test function\linebreak
$\Phi \in C^1([0,T];H^1_0(\Omega;\C^N))$ as follows
\begin{equation}
\label{eq:form2}
\Phi(x,t) := \sum_{k=1}^M d^k_m(t) \Phi_k(x)  ,
\end{equation}
where $\{ d^k_m \}_{k=1}^{M}$ are given smooth functions.
We choose $m \geq M$, multiply \eqref{eq:KSweak_test} by $d^k_m(t)$,
sum over $k=1,\dots,M$,
 and integrate with respect to $t$
to obtain the following
\begin{equation}
\label{eq:KSweak_exist}
\int_0^T i \dualP{ \Psi_m' }{ \Phi } \dd t
= \int_0^T B(\Psi_m,\Phi;u) + \alpha\scalar{ V(\Psi_m) \Psi_m }{ \Phi }
+ \scalar{ F }{ \Phi } \dd t.
\end{equation}

By setting now $m=m_l$ and by recalling continuity of $V(\Psi)$ from Lemma \ref{lem:VContinuousFunction}
 and
strong convergence $\Psi_{m_l} \rightarrow \Psi$ in $Y$,
we can pass to the limit to obtain
\begin{equation}
\label{eq:KSweak_exist2}
\int_0^T i \dualP{ \Psi' }{ \Phi } \dd t
= \int_0^T B(\Psi,\Phi;u) + \alpha\scalar{ V(\Psi) \Psi }{ \Phi }
+ \scalar{ F }{ \Phi } \dd t  .
\end{equation}
This equality holds for all $\Phi \in X$ as functions of the form \eqref{eq:form2}
are dense in $X$.
Hence, in particular
\begin{equation}
\label{eq:KSweak_exist3}
i \dualP{ \Psi' }{ v }
= B(\Psi,\Phi;u) + \alpha\scalar{ V(\Psi) \Psi }{ v }
+ \scalar{ F }{ v }  ,
\end{equation}
for any $v \in H^1_0(\Omega;\C^N)$ and a.e. in $[0, T]$.
From \cite[Theorem 3 p. 287]{Evans2010}, we know also that
$\Psi \in C([0,T];L^2(\Omega;\C^N))$.

It remains to prove that $\Psi(\cdot,0) = \Psi_0$.
For this purpose, we first notice from \eqref{eq:KSweak_exist2} that the following holds
\begin{equation}
\label{eq:KSweak_exist4}
\int_0^T - i \dualP{ \Phi' }{ \Psi } \dd t
= \int_0^T B(\Psi,\Phi;u) + \alpha\scalar{ V(\Psi) \Psi }{ \Phi }
+ \scalar{ F }{ \Phi } \dd t + \scalar{ \Psi(0) }{ \Phi(0) }  ,
\end{equation}
for any $\Phi \in C^1([0,T];H^1_0(\Omega;\C^N))$ with $\Phi(T)=0$.
Similarly, from \eqref{eq:KSweak_exist} we get
\begin{equation}\label{eq:KSweak_exist5}\begin{split}
\int_0^T - i \dualP{ \Phi' }{ \Psi_m } \dd t
&= \int_0^T B(\Psi_m,\Phi;u) + \alpha\scalar{ V(\Psi_m) \Psi_m }{ \Phi }
+ \scalar{ F }{ \Phi } \dd t\\
&\quad + \scalar{ \Psi_m(0) }{ \Phi(0) }  .
\end{split}\end{equation}
We set $m=m_l$ and use again the considered convergences to find
\begin{equation}
\label{eq:KSweak_exist6}
\int_0^T - i \dualP{ \Phi' }{ \Psi } \dd t
= \int_0^T B(\Psi,\Phi;u) + \alpha\scalar{ V(\Psi) \Psi }{ \Phi }
+ \scalar{ F }{ \Phi } \dd t + \scalar{ \Psi_0 }{ \Phi(0) }  ,
\end{equation}
because $\Psi_{m_l}(0) \rightarrow \Psi_0$.
As $\Phi(0)$ is arbitrary, by comparing \eqref{eq:KSweak_exist4}
and \eqref{eq:KSweak_exist6} we conclude that $\Psi(0) = \Psi_0$.
\end{proof}

\section{Uniqueness of a weak solution}
We have shown that there exists at least one solution $\Psi \in W$ of \eqref{eq:KSweak}. Now, we can apply the extension of Theorem \ref{thm:estimates} to the space $W$ and use the Lipschitz properties of the potentials to show that the solution is indeed unique.

\label{sec:Uniqueness}
\begin{theorem}
The weak form of the Kohn-Sham equations \eqref{eq:KSweak} is uniquely solvable.
\end{theorem}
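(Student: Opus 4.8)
The plan is to prove uniqueness by an energy estimate for the difference of two solutions, exactly paralleling Estimate~1 of Theorem~\ref{thm:estimates} and culminating in Gronwall's inequality. Suppose $\Psi_1,\Psi_2\in W$ both solve \eqref{eq:KSweak} with the same control $u$, inhomogeneity $F$ and initial datum $\Psi_0$, and set $\Xi:=\Psi_1-\Psi_2\in W$, so that $\Xi(0)=0$. Since $\Lambda$ is a fixed datum, the bilinear form $B(\cdot,\Phi;u)$ --- including the terms $\scalar{V(\Lambda)\Psi}{\Phi}$ and $D(\Psi,\Phi)$, the latter being real-linear in its first argument --- is additive in $\Psi$; subtracting the two weak formulations therefore leaves no inhomogeneity and yields
\[
i\scalar{\partial_t \Xi}{\Phi} = B(\Xi,\Phi;u) + \alpha\scalar{V(\Psi_1)\Psi_1 - V(\Psi_2)\Psi_2}{\Phi}
\]
for a.e.\ $t\in(0,T)$ and all $\Phi\in H_0^1(\Omega;\C^N)$.

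Next I would test with $\Phi=\Xi(t)$. Using the embedding $W\hookrightarrow C([0,T];L^2(\Omega;\C^N))$ and the identity $\tfrac12\abl{}{t}\|\Xi\|_{L^2}^2=\Re\scalar{\partial_t\Xi}{\Xi}$, and taking the imaginary part of the tested equation exactly as in the derivation of \eqref{eq:KSweak_real}, I obtain
\[
\frac{1}{2}\abl{}{t}\|\Xi\|_{L^2}^2 = \Im B(\Xi,\Xi;u) + \alpha\,\Im\scalar{V(\Psi_1)\Psi_1 - V(\Psi_2)\Psi_2}{\Xi}.
\]
The first term on the right is bounded by $c_0\|\Xi\|_{L^2}^2$ by \eqref{eq:est1im}. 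For the nonlinear term I would apply Cauchy--Schwarz and a uniform-in-time Lipschitz bound on $\Psi\mapsto V(\Psi)\Psi$: writing $V=V_H+V_x+V_c$, the Hartree part is controlled by Lemma~\ref{CancesLemma}, the exchange part by Assumption~\ref{assumptions}(\ref{assumptionVxLipschitz}), whose Lipschitz estimate is stated precisely for two weak solutions of \eqref{eq:KSweak}, and the correlation part by its $L^\infty$ bound (Assumption~\ref{assumptions}(\ref{assumptionVcBounded})). The $H^1$ norms appearing in the Hartree constant $C_u(\|\Psi_1\|_{H^1}^2+\|\Psi_2\|_{H^1}^2)$ are bounded uniformly in $t$ by \eqref{eq:H1estimate}, which holds for $W$-solutions, so there is a single constant $C_L$, depending only on $\|\Psi_0\|_{L^2}$ and $\|F\|_Y$, with
\[
\|V(\Psi_1)\Psi_1 - V(\Psi_2)\Psi_2\|_{L^2} \leq C_L\|\Xi\|_{L^2}
\]
for a.e.\ $t$. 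Combining the two bounds gives $\abl{}{t}\|\Xi\|_{L^2}^2 \leq 2(c_0 + \alpha C_L)\|\Xi\|_{L^2}^2$, and since $\|\Xi(0)\|_{L^2}=0$ the differential form of Gronwall's inequality forces $\|\Xi(t)\|_{L^2}=0$ for all $t\in[0,T]$, i.e.\ $\Psi_1=\Psi_2$.

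The main obstacle is the Hartree nonlinearity: its Lipschitz modulus is only quadratic in the $H^1$ norms of the two solutions, so the whole argument genuinely relies on having the a priori bound \eqref{eq:H1estimate} available for solutions in $W$ --- not merely in the Galerkin subspaces --- in order to convert that modulus into a constant that is uniform in time and thus admissible in Gronwall's inequality. A second delicate point is that the exchange-correlation Lipschitz estimate is assumed only \emph{between weak solutions}; this is exactly the regime of $\Psi_1$ and $\Psi_2$, so Assumption~\ref{assumptions}(\ref{assumptionVxLipschitz}) applies here, even though it would not be available for arbitrary $H^1$ functions.
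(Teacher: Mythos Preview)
Your proposal is correct and follows essentially the same route as the paper: subtract the two weak formulations, test with the difference, take the imaginary part, bound the contribution of $B$ via \eqref{eq:est1im} (equivalently, the paper isolates the $(1-\alpha)D$ term and bounds it by \eqref{eq:lem3:estD}), control the nonlinear remainder using Lemma~\ref{CancesLemma} together with Assumptions~\ref{assumptions}(\ref{assumptionVcBounded}),(\ref{assumptionVxLipschitz}) and the uniform-in-time $H^1$ bound \eqref{eq:H1estimate} for $W$-solutions, and conclude by Gronwall. Your explicit remarks on why \eqref{eq:H1estimate} for $W$-solutions and the ``between weak solutions'' clause in Assumption~\ref{assumptions}(\ref{assumptionVxLipschitz}) are needed are exactly the points the paper relies on.
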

\begin{proof}
Seeking a contradiction, we assume that there exists two distinct weak solutions
of \eqref{eq:KSweak} $\Psi$ and $\Upsilon$ in $W$ with $\| \Psi - \Upsilon \|_X > 0$.
Therefore, we have
\begin{equation}
\label{eq:KSweakUniq1}
\begin{split}
&i \scalar{ \partial_t \Psi }{ \Phi }
= B(\Psi,\Phi;u) + \alpha\scalar{ V(\Psi) \Psi}{ \Phi } + \scalar{ F }{ \Phi }  ,
\end{split}
\end{equation}
and
\begin{equation}
\label{eq:KSweakUniq2}
\begin{split}
&i \scalar{ \partial_t \Upsilon }{ \Phi }
= B(\Upsilon ,\Phi;u) + \alpha\scalar{ V(\Upsilon ) \Upsilon }{ \Phi } + \scalar{ F }{ \Phi }  ,
\end{split}
\end{equation}
for all test functions $\Phi \in H^1_0(\Omega; \C^N)$.
Subtracting term-by-term \eqref{eq:KSweakUniq2} from \eqref{eq:KSweakUniq1}
and defining $\hat{\Psi} := \Psi - \Upsilon$
we obtain the following
\begin{equation}
\label{eq:KSweakUniq3}
\begin{split}
i \scalar{ \partial_t \hat{\Psi} }{ \Phi }
&= B(\hat{\Psi} ,\Phi;u)
+ \alpha\scalar{ V(\Psi ) \Psi - V(\Upsilon ) \Upsilon }{ \Phi }  .
\end{split}
\end{equation}
By testing the previous \eqref{eq:KSweakUniq3} with $\Phi = \hat{\Psi}(t)$,
we obtain 
\begin{equation}
\label{eq:KSweakUniq4}
i \scalar{ \partial_t \hat{\Psi} }{ \hat{\Psi} }
= B(\hat{\Psi},\hat{\Psi};u)
+ \alpha\scalar{ V(\Psi ) \Psi - V(\Upsilon ) \Upsilon }{ \hat{\Psi} }  .
\end{equation}
Similarly, as for \eqref{eq:KSweak_test3} we have
\begin{equation}
\label{eq:KSweak_test4}
i \frac{1}{2} \abl{}{t} \| \hat{\Psi} \|_{L^2}^2
= B(\hat{\Psi},\hat{\Psi};u)
+ \alpha\scalar{ V(\Psi ) \Psi - V(\Upsilon ) \Upsilon }{ \hat{\Psi} }  .
\end{equation}
Now, we notice that the left-hand side is purely imaginary.
Consequently, by taking the imaginary part of \eqref{eq:KSweak_test4},
we obtain the following
\begin{equation}
\label{eq:KSweakU-imag}
\frac{1}{2} \abl{}{t} \| \hat{\Psi} \|_{L^2}^2
=\alpha\Im \left( \scalar{ V(\Psi ) \Psi - V(\Upsilon ) \Upsilon }{ \hat{\Psi} } +(1-\alpha)D(\hat{\Psi},\hat{\Psi})\right).
\end{equation}
From \eqref{eq:KSweakU-imag} and \eqref{eq:lem3:estD} in  Lemma \ref{lem:bound-weak-form}, we get
\begin{equation*}
\begin{split}
\abl{}{t} \| \hat{\Psi} \|_{L^2}^2
&= 2\alpha\Im \bigl( \scalar{ V(\Psi ) \Psi - V(\Upsilon ) \Upsilon }{ \hat{\Psi} } \bigr)+2(1-\alpha)\Im D(\hat{\Psi},\hat{\Psi})  \\
&\leq  \| V(\Psi) \Psi - V(\Upsilon ) \Upsilon  \|_{L^2} \| \hat{\Psi} \|_{L^2} + 2c_0 \| \hat{\Psi} \|_{L^2}^2.\\
\intertext{Using Lemma \ref{CancesLemma}, Assumptions \ref{assumptions} (\ref{assumptionVcBounded}), (\ref{assumptionVxLipschitz}), \eqref{eq:estimate-1}, and Theorem \ref{thm:estimates}, we obtain}
\abl{}{t} \| \hat{\Psi} \|_{L^2}^2&\leq C_u'(\|\Psi\|_{H^1}^2+\|\Upsilon\|_{H^1}^2) \|\hat{\Psi}\|_{L^2}^2+2c_0\| \hat{\Psi} \|_{L^2}^2\\
&\leq  c^\#\bigl(L+K+\| F \|_{L^2}^2 + \| \Psi_0 \|_{L^2}^2+c_0 \bigr) \| \hat{\Psi} \|_{L^2}^2.
\end{split}
\end{equation*}
By defining $\eta(t) := \| \hat{\Psi} \|_{L^2} ^2$ and
$\vartheta(t) := c^\#\bigl(L+K+\| F \|_{L^2}^2 + \| \Psi_0 \|_{L^2}^2+c_0 \bigr)$,
we obtain the following inequality
\begin{equation}
\eta'(t) \leq \vartheta(t) \eta(t)  .
\end{equation}
By applying the Gronwall's inequality, we obtain the following
\begin{equation}
\eta(t) \leq \exp \Bigl( \int_0^t \vartheta(s) ds \Bigr) \eta(0)  .
\end{equation}
By noticing that
\begin{equation}
\begin{split}
\int_0^t \vartheta(s) ds &= \int_0^t c^\#\bigl(L+K+\| F \|_{L^2}^2 + \| \Psi_0 \|_{L^2}^2+c_0 \bigr) \dd s \\
&\leq \int_0^T c^\#\bigl(L+K+\| F \|_{L^2}^2 + \| \Psi_0 \|_{L^2}^2+c_0 \bigr) \dd s \\
&\leq c^\#\bigl( \| F \|_{Y}^2 + T \| \Psi_0 \|_{L^2}^2 +T (c_0+L+K) \bigr)  ,
\end{split}
\end{equation}
and by recalling that $\eta(0) = \| \hat{\Psi}(0) \|_{L^2} = 0$, we obtain that
$\| \hat{\Psi} \|_{L^2} \leq 0$ a.e. in $(0,T)$, and the claim follows.
\end{proof}

\section{Improved regularity}\label{sec:ImprovedRegularity}

We have established the existence and uniqueness of a solution to \eqref{eq:KSweak} in $W$. Although our methodology and assumptions on $V$ differ from \cite{Jerome2015}, our result is similar to \cite{Jerome2015}.
Now, we improve these results in the case $\Psi_0\in H_0^1(\Omega)$. With this setting, we prove that the solution to \eqref{eq:KSweak} is twice weakly differentiable in space and its first spatial derivative is bounded.

\begin{lemma}[Difference quotients]\label{lem;diffquot} Assume that for fixed $u\in L^p(V)$, $1<p<\infty$, $V\subset\subset \Omega$, there exists a constant $C$ such that $\|D^h u\|_{L^p(V)} \leq C$ for all \linebreak $0<|h|<\frac{1}{2} \operatorname{dist}(V, \pa\Omega)$ where
\begin{align*}
	D_i^hu(x)=\frac{u(x+e_i h)-u(x)}{h},\quad D^hu=(D_1^hu,\dotsc , D_n^hu).
\end{align*}
Then
\begin{align*}
	u\in H^{1,p}(V), \text{ with } \|Du\|_{L^p(V)} \leq C,
\end{align*}
where $C$  may depend on $u$, e.g. on $\|u\|_{L^p}(\Omega)$. Furthermore, the statement holds for the case of two half-balls $\Omega=\{|x|<R\}\cap \{x_n> 0\}$ and $V=\{|x|<\frac{R}{2}\}\cap \{x_n> 0\}$.
\end{lemma}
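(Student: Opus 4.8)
The plan is to exploit the reflexivity of $L^p(V)$ for $1<p<\infty$ in order to extract weak limits of the difference quotients, and then to identify each such limit with the corresponding weak derivative of $u$ by means of a discrete integration-by-parts identity. This is the classical difference-quotient characterisation of Sobolev functions, so the steps are standard; I record them for completeness.

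First I would fix a coordinate direction $i$ and an arbitrary sequence $h_k\to 0$ with $0<|h_k|<\tfrac12\operatorname{dist}(V,\pa\Omega)$. By hypothesis $\|D_i^{h_k}u\|_{L^p(V)}\le C$, and since $L^p(V)$ is reflexive, the sequence $\{D_i^{h_k}u\}_k$ is weakly precompact; passing to a subsequence (not relabelled) I obtain $D_i^{h_k}u\rightharpoonup v_i$ in $L^p(V)$ for some $v_i\in L^p(V)$.

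The core step is the identification $v_i=\pa_i u$. For $\phi\in C_0^\infty(V)$ and $|h|$ smaller than the distance from $\operatorname{supp}\phi$ to $\pa V$, a change of variables gives the discrete integration-by-parts formula
\[
\int_V (D_i^h u)\,\phi\,\dd x = -\int_V u\,(D_i^{-h}\phi)\,\dd x .
\]
Since $\phi$ is smooth and compactly supported, $D_i^{-h}\phi\to\pa_i\phi$ uniformly on $V$ as $h\to 0$, so the right-hand side tends to $-\int_V u\,\pa_i\phi\,\dd x$; along the chosen subsequence the weak convergence makes the left-hand side tend to $\int_V v_i\,\phi\,\dd x$ (as $\phi\in L^{p'}(V)$). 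Equating the limits yields
\[
\int_V v_i\,\phi\,\dd x = -\int_V u\,\pa_i\phi\,\dd x \qquad \text{for all } \phi\in C_0^\infty(V),
\]
which is precisely the definition of the weak derivative $v_i=\pa_i u$; in particular the weak gradient $Du=(v_1,\dots,v_n)$ exists, is independent of the extracting subsequence, and hence $u\in H^{1,p}(V)$. Weak lower semicontinuity of the $L^p$-norm then gives $\|\pa_i u\|_{L^p(V)}\le\liminf_k\|D_i^{h_k}u\|_{L^p(V)}\le C$, and collecting components yields $\|Du\|_{L^p(V)}\le C$. For the half-ball geometry the four steps are repeated verbatim, except that the difference quotients are formed only in the tangential directions $e_1,\dots,e_{n-1}$: for these the shift $x+e_ih$ keeps $x_n>0$ fixed and stays inside the ball $\{|x|<R\}$ by the buffer $R/2$ between $V$ and the spherical part of $\pa\Omega$, so the argument applies and produces the tangential weak derivatives with the same bound.

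I expect the main obstacle to be the rigorous passage from the discrete identity to the weak-derivative characterisation: one must keep $\operatorname{supp}\phi$ inside $V$ under the shift so that the integration by parts is legitimate, and then combine the uniform convergence $D_i^{-h}\phi\to\pa_i\phi$ with the merely weak convergence of $D_i^{h_k}u$. The half-ball case carries the additional subtlety that the flat part of $\pa\Omega$ obstructs difference quotients in the normal direction, which is why only tangential quotients are used there.
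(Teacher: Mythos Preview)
Your argument is the standard difference-quotient characterisation and is correct; it is precisely the proof given in Evans (§5.8.2, Theorem~3), which is all the paper invokes for this lemma---the paper's own ``proof'' is simply the citation ``See \cite[§5.8.2, Theorem 3]{Evans2010} and the remark after the proof.'' Your treatment of the half-ball case (restricting to tangential directions so that the shifted points remain in the upper half-space) matches the content of the remark in Evans and is exactly how the lemma is subsequently used in the boundary-regularity argument of Lemma~\ref{lem:EllipticRegularity}.
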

\begin{proof}
	See \cite[§5.8.2, Theorem 3]{Evans2010} and the remark after the proof.
\end{proof}

Next, we extend the result in \cite[§6.3.2, Theorem 4]{Evans2010} for linear elliptic problems to the case of a specific nonlinear problem.

\begin{lemma}\label{lem:EllipticRegularity}
	Let $\varphi\in H_0^1(\Omega; \C^N)$ be a weak solution of the elliptic boundary value problem
	\begin{align*}
		B(\varphi,v;u)+\scalar{V(\varphi)\varphi}{v}=\scalar{A}{v}, \forall v\in H_0^1(\Omega; \C^N), \; A\in L^2(\Omega; \C^N),
	\end{align*}
	such that $\|\varphi\|_{L^2}^2\leq \gamma \|A\|_{L^2}^2$ holds. Furthermore be $\partial \Omega\in C^2$. Then $\varphi\in H^2(\Omega; \C^N)$ and
	\begin{align*}
		\|\varphi\|_{H^2} &\leq c\left(\|A\|_{L^2}+\|\varphi\|_{L^2} \right),\\
		\text{where }c&=\max\left\{1,\|V_0\|_{L^\infty}+\|u\|_{C(0,T)}\|V_u\|_{L^\infty}+\|\varphi\|_{H^1}^2+L+K\right\}.
	\end{align*}
\end{lemma}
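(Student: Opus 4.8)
The plan is to freeze the nonlinear and lower-order terms into the right-hand side, turning the problem into a Dirichlet problem for the plain Laplacian with $L^2$ data, to which the linear elliptic regularity of \cite[§6.3.2, Theorem 4]{Evans2010} applies. Since $\varphi$ is a \emph{given} weak solution, I would rewrite the identity $B(\varphi,v;u)+\scalar{V(\varphi)\varphi}{v}=\scalar{A}{v}$ as
\begin{equation*}
\scalar{\nabla\varphi}{\nabla v}=\scalar{g}{v},\qquad \forall v\in H^1_0(\Omega;\C^N),
\end{equation*}
with
\begin{equation*}
g:=A-V_{ext}\varphi-(1-\alpha)V(\Lambda)\varphi-(1-\alpha)g_D-V(\varphi)\varphi,
\end{equation*}
where $g_D\in L^2(\Omega;\C^N)$ is the element representing the $D$-term, i.e.\ $D(\varphi,v)=\scalar{g_D}{v}$, which exists and satisfies $\|g_D\|_{L^2}\le c_0\|\varphi\|_{L^2}$ by \eqref{eq:lem3:estD}. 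Because the principal part is the decoupled operator $I_N\otimes(-\nabla^2)$, the scalar real-variable theory can be applied componentwise, and separately to real and imaginary parts, the $\C^N$-coupling entering only through the data $g$.

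The essential preliminary is to verify $g\in L^2(\Omega;\C^N)$ and to quantify its norm. I would estimate the contributions separately: $A\in L^2$ by assumption; $\|V_{ext}\varphi\|_{L^2}\le(\|V_0\|_{L^\infty}+\|u\|_{C(0,T)}\|V_u\|_{L^\infty})\|\varphi\|_{L^2}$, using $V_0,V_u\in L^\infty$ and the embedding $H^1(0,T)\hookrightarrow C[0,T]$; $\|V(\Lambda)\varphi\|_{L^2}\le\|V(\Lambda)\|_{L^\infty}\|\varphi\|_{L^2}$ since $\Lambda\in L^\infty(\Omega)$ renders $V(\Lambda)$ bounded (as already used in Lemma \ref{lem:bound-weak-form}). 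For the genuinely nonlinear Hartree part I would apply Lemma \ref{CancesLemma} with vanishing second argument, yielding $\|V_H(\varphi)\varphi\|_{L^2}\le C_u\|\varphi\|_{H^1}^2\|\varphi\|_{L^2}$, while Assumptions \ref{assumptions}(\ref{assumptionVcBounded}) and (\ref{assumptionVxLipschitz}) give $\|V_x(\varphi)\varphi\|_{L^2}\le L\|\varphi\|_{L^2}$ and $\|V_c(\varphi)\varphi\|_{L^2}\le K\|\varphi\|_{L^2}$. Summing, $\|g\|_{L^2}\le\|A\|_{L^2}+\hat c\,\|\varphi\|_{L^2}$ with $\hat c=\|V_0\|_{L^\infty}+\|u\|_{C(0,T)}\|V_u\|_{L^\infty}+\|\varphi\|_{H^1}^2+L+K$ (augmented by the $V(\Lambda)$ and $c_0$ terms in the adjoint case $\alpha=0$), which is precisely the quantity appearing inside the constant $c$.

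With $g\in L^2$ secured, I would invoke the $H^2$-estimate for $-\nabla^2$ with homogeneous Dirichlet data on the $C^2$ domain $\Omega$. Following \cite[§6.3.2, Theorem 4]{Evans2010}, this splits into interior regularity, obtained by the method of difference quotients — bounding $\|D^h\nabla\varphi\|_{L^2(V)}$ uniformly for $V\Subset\Omega$ and passing to the limit via Lemma \ref{lem;diffquot} — and boundary regularity, where one flattens $\partial\Omega$ by a $C^2$ diffeomorphism, estimates difference quotients in the directions tangential to the boundary (using the half-ball form of Lemma \ref{lem;diffquot}), and finally reads off the remaining pure normal second derivative directly from the equation $-\nabla^2\varphi=g$. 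This yields $\varphi\in H^2(\Omega;\C^N)$ together with $\|\varphi\|_{H^2}\le C_\Omega(\|g\|_{L^2}+\|\varphi\|_{L^2})$, where $C_\Omega$ depends only on $\Omega$ and $n$.

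Combining the two bounds and absorbing $C_\Omega C_u$ into $c=\max\{1,\hat c\}$ then produces $\|\varphi\|_{H^2}\le c(\|A\|_{L^2}+\|\varphi\|_{L^2})$, as claimed; the hypothesis $\|\varphi\|_{L^2}^2\le\gamma\|A\|_{L^2}^2$ is then available should one wish to re-express the bound purely in terms of the data $\|A\|_{L^2}$. I expect the main difficulty to lie not in the elliptic machinery, which is taken essentially verbatim from \cite{Evans2010}, but in legitimately treating the solution-dependent coefficient $V(\varphi)$ as fixed $L^2$ data: the only term not immediately dominated by a constant multiple of $\|\varphi\|_{L^2}$ is the Hartree term $V_H(\varphi)\varphi$, whose control genuinely requires the full $H^1$ regularity of $\varphi$ through Lemma \ref{CancesLemma}. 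A secondary point deserving care is that no fixed-point or bootstrap argument is needed here — freezing $V(\varphi)$ into the data is harmless precisely because $\varphi$ is a prescribed weak solution rather than an unknown being constructed.
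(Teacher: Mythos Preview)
Your approach is correct and shares the same core idea as the paper's proof: bound $\|V(\varphi)\varphi\|_{L^2}$ by a constant (depending on $\|\varphi\|_{H^1}$) times $\|\varphi\|_{L^2}$ via Lemma~\ref{CancesLemma} and Assumptions~\ref{assumptions}(\ref{assumptionVcBounded}),(\ref{assumptionVxLipschitz}), and then invoke the $H^2$-regularity theory of \cite[§6.3.2, Theorem~4]{Evans2010}. The difference is one of framing. The paper keeps the nonlinear term on the left and re-runs the Evans difference-quotient and boundary-flattening argument with it in place; this forces a separate discussion of how the nonlocal Hartree operator behaves under the local $C^2$ coordinate change used to straighten $\partial\Omega$ (the paper resolves it by evaluating $V_H(\Psi)$ at the original point $x=k(\hat x)$ rather than transforming the integral). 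Your decision to freeze \emph{all} lower-order and nonlinear terms into a single $L^2$ datum $g$ and apply Evans to the pure Laplacian as a black box sidesteps that issue entirely---once $g$ is a fixed $L^2$ function, its transformation under a diffeomorphism is routine and the nonlocality of $V_H$ is invisible. Both routes yield the same estimate; yours is the more economical packaging, while the paper's makes explicit where each structural feature of $V$ enters the regularity machinery.
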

\begin{proof}
	To extend the results in \cite[§6.3.2, Theorem 4]{Evans2010}, two issues have to be treated carefully. First, the nonlinear potential has to be bounded in a suitable way and, second, extra care has to be taken when changing the coordinates.

	The nonlinear potential has to be bounded in such a way that Lemma \ref{lem;diffquot} can be applied. Therefore, we need to find a constant $c$ such that $\|V(\varphi)\varphi\|_{L^2}^2\leq c\|\varphi\|_{L^2}^2$, where $c$ is allowed to depend on $\varphi$.
	This can be done using Lemma \ref{CancesLemma} as follows
	\begin{align*}
		\|V_H(\varphi)\varphi\|_{L^2} \leq C_u\|\varphi\|_{H^1}^2 \|\varphi\|_{L^2},
	\end{align*}	
	and using Assumptions \ref{assumptions} (\ref{assumptionVxLipschitz}) and (\ref{assumptionVcBounded}), we obtain
	\begin{align*}
		\|V(\varphi)\varphi\|_{L^2}^2 \leq (\|\varphi\|_{H^1}^4+L^2+K^2)\|\varphi\|_{L^2}^2.
	\end{align*}
	Now, we can apply Lemma \ref{lem;diffquot} to obtain that the solution is in $H^2(U)$ for a half-ball $U$.
	
	Furthermore, in the proof it is necessary to locally flatten out the boundary. This is done by a $C^2$-map that keeps all the coordinates apart from one dimension which is transformed onto a line. This ensures that the determinant of the Jacobian is equal to one.
	
	The coordinate transformation of the Laplacian and the linear external potential is as for standard parabolic PDEs. The exchange and correlation potentials do not explicitly depend on space and time but only pointwise on the wave function. Hence a change of coordinates does not change the potential. For the Hartree potential, however, more care is needed. Let the change of coordinates be given by
	\begin{align*}
		x&=k(\hat{x}), &&	\hat{\Psi}(\hat{x})=\Psi(k(\hat{x})).
	\end{align*}
	Regarding the Hartree potential, one has to account for the fact that the transformation $k$ is only locally defined as a $C^2$ map,
	so the transform to a global integral operator is not well-defined. However, it is possible to evaluate $V_H(\hat{\Psi})(\hat{x})$ as $V_H(\Psi)(x)$ in $x=k(\hat{x})$.
	
	With this preparation, let $U$ be the image of a half-ball under $k$. Then we bound $\|\Psi\|_{H^1(U)}\leq C(\|F\|_{L^2(\Omega)}+\|\Psi\|_{L^2(\Omega)})$. As $\Omega$ is compact, it can be covered with finitely many sets $U_i$, so we find
	\begin{align*}
		\|\Psi\|_{H^1(\Omega)}\leq \sum_i \|\Psi\|_{H^1(U_i)} \leq \sum_i C\left(\|F\|_{L^2(\Omega)}+\|\Psi\|_{L^2(\Omega)}\right).
	\end{align*}
	
	Now the standard proof for elliptic equations based on difference quotients can be applied, e.g., \cite[§6.3.2, Theorem 4]{Evans2010}.
\end{proof}

\begin{theorem}\label{thm:ImprovedRegularity}
	Assume $\Psi_0\in H_0^1(\Omega)$, $F\in Y$ and $\partial \Omega \in C^2$. Suppose $\Psi \in W$ is the solution to \eqref{eq:KSweak}.
	Then
	\begin{align*}
		\Psi \in L^2(0,T;H^2(\Omega; \C^N)) \cap L^\infty(0,T; H_0^1(\Omega; \C^N)), \quad \Psi'\in L^2(0,T;L^2(\Omega; \C^N)).
	\end{align*}
	Furthermore the following estimate holds
	\begin{align}\label{thm:ImprovedRegularity:Estimate}
		\operatorname*{ess\, sup}_{0\leq t \leq T} \|\Psi(t)\|_{H^1}+\|\Psi \|_{L^2(0,T;H^2(\Omega))}+\|\Psi'\|_Y \leq C\left( \|\Psi_0\|_{H^1}+ \|F\|_Y \right).
	\end{align}
\end{theorem}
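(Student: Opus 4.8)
The plan is a two-step bootstrap. The $L^\infty(0,T;H^1_0)$ bound is already contained in estimate \eqref{eq:H1estimate} of Theorem \ref{thm:estimates}; since $\|\Psi_0\|_{L^2}\le\|\Psi_0\|_{H^1}$, it gives at once $\operatorname*{ess\,sup}_{0\le t\le T}\|\Psi(t)\|_{H^1}\le C(\|\Psi_0\|_{H^1}+\|F\|_Y)$, the first summand of \eqref{thm:ImprovedRegularity:Estimate}. It then remains to show (i) $\Psi'\in Y$ and (ii) $\Psi\in L^2(0,T;H^2(\Omega;\C^N))$, and I would derive (ii) from (i) through the elliptic regularity of Lemma \ref{lem:EllipticRegularity}.

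The reduction (i)$\Rightarrow$(ii) is the routine part. Once $\Psi'\in Y$ is available I would freeze $t$ and read \eqref{eq:KSweak_exist3} as the stationary identity $B(\Psi(t),v;u(t))+\alpha\scalar{V(\Psi(t))\Psi(t)}{v}=\scalar{A(t)}{v}$ for all $v\in H^1_0(\Omega;\C^N)$, with $A(t):=i\Psi'(t)-F(t)\in L^2(\Omega;\C^N)$. As $\pa\Omega\in C^2$ and, by \eqref{eq:H1estimate}, $\|\Psi(t)\|_{H^1}$ is essentially bounded uniformly in $t$, the constant in Lemma \ref{lem:EllipticRegularity} may be taken independent of $t$, giving $\|\Psi(t)\|_{H^2}\le c(\|\Psi'(t)\|_{L^2}+\|F(t)\|_{L^2}+\|\Psi(t)\|_{L^2})$. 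Squaring and integrating over $(0,T)$ yields $\|\Psi\|_{L^2(0,T;H^2)}\le C(\|\Psi'\|_Y+\|F\|_Y+\|\Psi\|_Y)$, which together with \eqref{eq:estimate-1} and step (i) produces \eqref{thm:ImprovedRegularity:Estimate}. One caveat: the stability hypothesis $\|\varphi\|_{L^2}^2\le\gamma\|A\|_{L^2}^2$ of the lemma must be checked for a.e.\ $t$, which I would verify from the energy estimates, absorbing the excess into the right-hand side if needed.

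For step (i) I would argue at the Galerkin level and then pass to the limit. Choosing $\{\phi_k\}$ to be the Dirichlet eigenfunctions of $-\Delta$ (simultaneously $H^1_0$-orthogonal and $L^2$-orthonormal, as required) makes $-\Delta\Psi_m\in P_m$, so that both $\partial_t\Psi_m$ and $-\Delta\Psi_m$ are admissible test functions in \eqref{eq:KSweak_test}. Testing with $-\Delta\Psi_m$ produces the coercive term $\scalar{\nabla\Psi_m}{\nabla(-\Delta\Psi_m)}=\|\Delta\Psi_m\|_{L^2}^2$, while the remaining products are controlled by Cauchy-Schwarz, by $\|V_{ext}\Psi_m\|_{L^2}\le(\|V_0\|_{L^\infty}+K\|u\|_{H^1(0,T)}\|V_u\|_{L^\infty})\|\Psi_m\|_{L^2}$, by the nonlinear bound $\|V(\Psi_m)\Psi_m\|_{L^2}^2\le(\|\Psi_m\|_{H^1}^4+L^2+K^2)\|\Psi_m\|_{L^2}^2$ from the proof of Lemma \ref{lem:EllipticRegularity} (the $H^1$-factor being frozen by \eqref{eq:H1estimate}), and, in the adjoint case $\alpha=0$, by the fact that the additional terms of $B$ are lower order through \eqref{eq:lem3:estD}. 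Since the projected equation gives $\|\partial_t\Psi_m\|_{L^2}=\|P_m[-\Delta\Psi_m+V_{ext}\Psi_m+\alpha V(\Psi_m)\Psi_m+F]\|_{L^2}$, the quantities $\|\partial_t\Psi_m\|_{L^2}$ and $\|\Delta\Psi_m\|_{L^2}$ are comparable up to the controlled lower-order terms, so a uniform bound on one transfers to the other; weak lower semicontinuity of $\|\cdot\|_Y$ along $\partial_t\Psi_{m_l}\rightharpoonup\Psi'$ in $X^*$ then delivers $\Psi'\in Y$ with the desired estimate.

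I expect step (i) to be the main obstacle. Because the equation is of Schrödinger type, $i\scalar{\partial_t\Psi_m}{\partial_t\Psi_m}$ is purely imaginary, so testing with $\partial_t\Psi_m$ returns only the $H^1$ energy balance and never $\|\partial_t\Psi_m\|_{L^2}^2$ in isolation; and testing with $-\Delta\Psi_m$, once $\partial_t\Psi_m$ is eliminated through the equation, degenerates into a tautology for $\|\Delta\Psi_m\|_{L^2}^2$ rather than a bound. The standard parabolic remedy of integrating $\scalar{F}{\partial_t\Psi_m}$ by parts in time is blocked here, since $F\in Y$ carries no time regularity and $u$ is only $H^1(0,T)$. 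Closing the uniform $Y$-bound on $\partial_t\Psi_m$ therefore appears to require either additional time regularity of the data (as assumed in \cite{Jerome2015}) or a genuinely dispersive argument, and this is where the bulk of the effort, and the real difficulty, must be concentrated.
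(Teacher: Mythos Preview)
Your outline coincides with the paper's on the two easy pieces: the $L^\infty(0,T;H^1_0)$ bound is indeed just \eqref{eq:H1estimate}, and $L^2(0,T;H^2)$ is obtained exactly as you describe, by freezing $t$, reading \eqref{eq:KSweak} as an elliptic problem with right-hand side $A(t)=i\Psi'(t)-F(t)$, and invoking Lemma~\ref{lem:EllipticRegularity}. So everything hinges on your step~(i).

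Here the paper does precisely what you dismissed: it tests the Galerkin equation \eqref{eq:KSweak_test} with $\partial_t\Psi_m$. The gradient contribution $\scalar{\nabla\Psi_m}{\nabla\partial_t\Psi_m}$ is replaced by $\tfrac12\tfrac{\dd}{\dd t}\|\nabla\Psi_m\|_{L^2}^2$; upon time integration this is bounded by $\operatorname*{ess\,sup}_t\|\Psi_m(t)\|_{H^1}^2$, already controlled by \eqref{eq:H1estimate}. Every remaining term pairs $\partial_t\Psi_m$ with an $L^2$ quantity bounded via Lemma~\ref{CancesLemma}, Assumption~\ref{assumptions}\,(\ref{assumptionVcBounded}),(\ref{assumptionVxLipschitz}) and the adjoint estimate \eqref{eq:Imp:D}, so Young's inequality with small $\epsilon$ absorbs all $\epsilon\|\partial_t\Psi_m\|_{L^2}^2$ pieces into the left and yields \eqref{eq:Improved:Psimprime}. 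No time regularity of $F$ beyond $F\in Y$ is invoked; the point is that the single top-order term is treated as a total derivative and bounded \emph{after} integration rather than pointwise.

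Your reservation about the factor $i$ is well placed, however. The identity $\Re\scalar{\nabla\Psi_m}{\nabla\partial_t\Psi_m}=\tfrac12\tfrac{\dd}{\dd t}\|\nabla\Psi_m\|_{L^2}^2$ holds only for the real part, whereas it is the \emph{imaginary} part of the tested identity that puts $\|\partial_t\Psi_m\|_{L^2}^2$ on the left --- and there one is left with $\Im\scalar{\nabla\Psi_m}{\nabla\partial_t\Psi_m}$, which is not a total time derivative. The paper's display \eqref{eq:improved:test} simply omits the $i$ and proceeds as for a parabolic equation, so it does not actually resolve the obstruction you isolated; your diagnosis of where the difficulty lies is accurate, and the paper's argument at this step shares the same gap you anticipated.
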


\begin{proof}
	We recall \eqref{eq:H1estimate}, that is,
	\begin{align}\label{eq:Improved:Psim}
		\operatorname*{ess\, sup}_{0\leq t \leq T} \|\Psi(t)\|_{H^1(\Omega)}^2\leq C(\|\Psi_0\|_{L^2(\Omega)}^2+\|F\|_Y^2),
	\end{align}
	which means that $\Psi\in L^\infty(0,T;H_0^1(\Omega)$).
	
	For $\pa_t\Psi$, we consider the Galerkin space $W_m$ and take a fixed $m$, multiply \eqref{eq:KSweak_test} with $\pa_td_m^k(t)$, and sum for $k=1,\dotsc,m$ to obtain the following
		\begin{equation}\label{eq:improved:test}\begin{split}
			\scalar{\pa_t\Psi_m(t)}{\pa_t\Psi_m(t)}&=B(\Psi_m(t), \pa_t\Psi_m(t);u(t))+\alpha\scalar{V(\Psi(t))\Psi(t)}{\pa_t\Psi_m(t)}\\
			&\quad+\scalar{F(t)}{\pa_t\Psi_m(t)}
		\end{split}\end{equation}
		a.e. in $(0,T)$. 
	For $D(\Psi_m(t),\pa_t\Psi_m(t))$, we have
	\begin{align*}
		|D_{xc}(\Psi_m(t),\pa_t \Psi_m(t))|&=\left|2\scalar{\pabl{V_{xc}}{\rho}(\Lambda(t)) \Re\scalarC{\Psi_m(t)}{\Lambda(t)}\Lambda(t)}{\pa_t\Psi_m(t)}\right|.
	\end{align*}
	Because $\Lambda(t)\in L^\infty(\Omega)$, we have
	\begin{equation}\label{eq:Imp:Dxc}\begin{split}
		|D_{xc}(\Psi_m(t),\pa_t \Psi_m(t))|&\leq C \int_\Omega \left|\sum_{i=1}^N \Re \psi_{i,m}(t) \sum_{j=1}^N \co{\pa_t \psi_{j,m}(t)} \right| \dd x\\
		&\leq C \int_\Omega \sum_{i,j=1}^N \frac{1}{\epsilon}|\psi_{i,m}(t)|^2+\epsilon |\pa_t\psi_{j,m}(t)|^2\\
		&\leq C N (\frac{1}{\epsilon}\|\Psi_m(t)\|_{L^2}^2+\epsilon\|\pa_t \Psi_m(t)\|_{L^2}^2),
	\end{split}\end{equation}
	where we use  Young's inequality for products.
	For $D_H$, we use Young's inequality for convolutions \cite[Theorem 14.6]{Schilling2005} and the fact that $\Lambda \in L^\infty(\Omega)$. We have
	\begin{equation}\label{eq:Imp:DH}\begin{split}
		|D_H(\Psi_m(t),\pa_t \Psi_m(t))|&=\left|\sum_{j=1}^N\int_\Omega (2\Re \scalarC{\Psi_m(t)}{\Lambda(t)}\star w)(x)\Lambda_j(x,t) \co{\pa_t\Psi_{m,j}(x,t)} \dd x \right|\\
		&\leq C'\scalar{|\Psi_m(t)|\star w}{|\pa_t\Psi_m(t)|}\\
		&\leq C' \| |\Psi_m(t)|\star w\|_{L^2} \|\pa_t\Psi_m(t)\|_{L^2}\\
		&\leq C' \| \Psi_m(t)\|_{L^2}\| w\|_{L^1} \|\pa_t\Psi_m(t)\|_{L^2},
	\end{split}\end{equation}
	where $w$ represents the Coulomb potential.
	Consequently, by \eqref{eq:Imp:Dxc} and \eqref{eq:Imp:DH}, we get the following
	\begin{equation}\label{eq:Imp:D}
			|D(\Psi_m(t),\pa_t \Psi_m(t))|\leq c (\frac{1}{\epsilon}\|\Psi_m(t)\|_{L^2}^2+\epsilon\|\pa_t \Psi_m\|_{L^2}^2) =: \tilde{D}.
	\end{equation}

	Estimate \eqref{eq:Imp:DH} is used together with \eqref{eq:improved:test} to obtain the following
		\begin{align*}
			&\|\pa_t\Psi_m(t)\|_{L^2}^2
			\\&\leq \frac{1}{2} \abl{}{t}\scalar{\nabla\Psi_m(t)}{\nabla\Psi_m(t)}+\scalar{V_{ext}(t) \Psi_m(t)}{\pa_t\Psi_m(t)}+D(\Psi_m(t),\pa_t\Psi_m(t))\\
			&\quad+\scalar{V(\Psi_m(t))\Psi_m(t)}{\pa_t\Psi_m}+\scalar{F(t)}{\pa_t\Psi_m(t)}\\
			&\leq \frac{1}{2} \abl{}{t}\scalar{\nabla\Psi_m(t)}{\nabla\Psi_m(t)}+\|V_{ext}(t)\|_{L^\infty} \|\Psi_m(t)\|_{L^2}\|\pa_t\Psi_m(t)\|_{L^2} + \tilde{D}\\
			&\quad +c_u\|\Psi_m(t)\|_{L^2}\|\Psi_m(t)\|_{H^1}^2\|\pa_t\Psi_m(t)\|_{L^2} +K \|\Psi_m(t)\|_{L^2}\|\pa_t\Psi_m(t)\|_{L^2}\\
			&\quad +\|F(t)\|_{L^2}\|\pa_t\Psi_m(t)\|_{L^2},
		\end{align*}
		where we use Lemma \ref{CancesLemma} and Assumptions \ref{assumptions} (\ref{assumptionVcBounded}) and (\ref{assumptionVxLipschitz}) to estimate $V$.
		Next, by using Cauchy-Schwarz inequality, \eqref{eq:H1estimate} and Young's inequality with an arbitrary positive $\epsilon$, we get
		\begin{align*}
			&\|\pa_t\Psi_m(t)\|_{L^2}^2\\
			&\leq \frac{1}{2} \abl{}{t} \|\Psi_m(t)\|_{H^1}^2+ \frac{1}{\epsilon}\|F(t)\|_{L^2}^2+\epsilon\|\pa_t\Psi_m(t)\|_{L^2}^2 + \tilde{D}\\
			&\quad + 			(\|V_{ext}(t)\|_{L^\infty}+K+C_1(\|\Psi_0\|_{L^2}^2+\|F\|_Y^2))\left(\frac{1}{\epsilon}\|\Psi_m(t)\|_{L^2}^2+\epsilon\|\pa_t\Psi_m(t)\|_{L^2}^2 \right)\\
			&\leq\frac{1}{2} \abl{}{t}\|\Psi_m(t)\|_{H^1}^2+\frac{\Gamma}{\epsilon}\left( \|\Psi_m(t)\|_{L^2}^2+\|F(t)\|_{L^2}^2\right)
			+\epsilon \Gamma \|\pa_t\Psi_m(t)\|_{L^2}^2,
		\end{align*}
		where $\Gamma$ is a constant depending only on $\|\Psi_0\|_{L^2}$, $\|F\|_Y$, $\max_{t\in [0,T]}\|V_{ext}(t)\|_{L^\infty}$ and $K$.
	Now, we choose $\epsilon$ small enough, that is $\epsilon < \frac{1}{\Gamma}$ and integrate from $0$ to $T$. We obtain
	\begin{align*}
			\int_0^T \|\pa_t\Psi_m(t)\|_{L^2}^2\dd t &\leq \frac{1}{1-\epsilon \Gamma}\left( \operatorname*{ess\,sup}_{0\leq t\leq T} \|\Psi_m(t)\|_{H^1}^2+\frac{\Gamma}{\epsilon}\int_0^T \|\Psi_m(t)\|_{L^2}^2+\|F(t)\|_{L^2}^2 \dd t \right).
		\end{align*}
	
	Using \eqref{eq:estimate-1} and \eqref{eq:H1estimate}, this gives
	\begin{align}\label{eq:Improved:Psimprime}
		\|\pa_t\Psi_m\|_Y^2\leq \Gamma'(\|\Psi_0\|_{L^2}^2+\|F\|_Y^2).
	\end{align}
	Passing to the limit as $m\rightarrow \infty$ we find $\Psi'\in Y$.
	
	Now, we rewrite \eqref{eq:KSweak} for a fixed time $t$ as follows
	\begin{align}
		B(\Psi(t), \Phi; u(t))+\alpha\scalar{V(\Psi(t))\Psi(t)}{\Phi}=\scalar{-F(t)+i\pa_t \Psi(t)}{\Phi},
	\end{align}
	where $\Psi$ is the solution to \eqref{eq:KSweak}.
	Using Theorem \ref{thm:estimates} we have that the solution is bounded and, therefore, the estimate in Lemma \ref{lem:EllipticRegularity} holds. We have
	\begin{align}\label{eq:ellipticH2bound}
		\|\Psi(t)\|_{H^2} &\leq c (\|A(t)\|_{L^2}+\|\Psi(t)\|_{L^2})\\
		&\leq c(\|F(t)\|_{L^2}+\|\Psi'(t)\|_{L^2}+\|\Psi(t)\|_{L^2}),\label{eq:ellipticH2bound2}
	\end{align}
	where $A(t)=-F(t)+i\pa_t \Psi(t)$. Next, we integrate \eqref{eq:ellipticH2bound} from $0$ to $T$, and use \eqref{eq:estimate-2} and \eqref{eq:Improved:Psimprime} to obtain the following
	
	\begin{align*}
		\|\Psi\|_{L^2(0,T; H^2(\Omega))}^2
		&\leq C(\|\Psi_0\|_{L^2}^2+\|F\|_Y^2).
	\end{align*}
	All together, we have shown the estimate.
\end{proof}

\begin{theorem}\label{thm:ImprovedRegularity2}
	If in addition to the assumptions of Theorem \ref{thm:ImprovedRegularity}, $\Psi_0\in H^2(\Omega)\cap H_0^1(\Omega)$, and $F\in H^1(0,T;L^2(\Omega))$ hold, then for the solution of \eqref{eq:KSweak}, we have
	\begin{align}
		\Psi'\in L^\infty(0,T;L^2(\Omega)) && \text{and} && \Psi\in L^\infty(0,T;H^2(\Omega)).
	\end{align}
\end{theorem}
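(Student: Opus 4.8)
The plan is a bootstrap in time regularity in the spirit of \cite[\S7.1.3]{Evans2010}, carried out at the Galerkin level and then passed to the limit. Set $\chi_m := \partial_t \Psi_m$ and differentiate the Galerkin identity \eqref{eq:KSweak_test} in time; this is legitimate since the coefficients $e^{kl}$ inherit $H^1(0,T)$-regularity from $u$, the data satisfies $F\in H^1(0,T;L^2(\Omega))$, and $G^k$ is locally Lipschitz (Lemma \ref{lem:GlocallyLipschitz}), so $t\mapsto d_m^k(t)$ is $C^1$ with absolutely continuous derivative (alternatively one argues with difference quotients in time). Testing the differentiated equation with $\chi_m$ makes the left-hand side equal to $\frac{i}{2}\frac{\dd}{\dd t}\|\chi_m\|_{L^2}^2$, which is purely imaginary, so taking imaginary parts yields
\[
\frac{1}{2}\frac{\dd}{\dd t}\|\chi_m\|_{L^2}^2 \le c_0\|\chi_m\|_{L^2}^2 + |\scalar{V_u\,\dot u\,\Psi_m}{\chi_m}| + \alpha\,|\scalar{\partial_t(V(\Psi_m)\Psi_m)}{\chi_m}| + |\scalar{\partial_t F}{\chi_m}|,
\]
where the term $c_0\|\chi_m\|_{L^2}^2$ comes from $\Im B(\chi_m,\chi_m;u)$ via \eqref{eq:est1im} (it vanishes in the forward case $\alpha=1$, where $D\equiv 0$), and where in the adjoint case $\alpha=0$ the differentiation of $B$ produces in addition terms carrying $\partial_t\Lambda$, which are controlled by the regularity $\Lambda'\in Y$, $\Lambda\in L^\infty(0,T;H^1)$ of the forward solution.

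Each remaining inner product is handled by Cauchy-Schwarz and Young. The delicate one is the derivative of the nonlinear potential; rather than differentiating $V$ explicitly I would invoke the Lipschitz bound already available: combining Lemma \ref{CancesLemma} with Assumption \ref{assumptions} (\ref{assumptionVcBounded})-(\ref{assumptionVxLipschitz}) gives $\|V(\Phi)\Phi-V(\Upsilon)\Upsilon\|_{L^2}\le C(\|\Phi\|_{H^1}^2+\|\Upsilon\|_{H^1}^2+L+K)\|\Phi-\Upsilon\|_{L^2}$, whence $\|\partial_t(V(\Psi_m)\Psi_m)\|_{L^2}\le C(\|\Psi_m\|_{H^1}^2+L+K)\|\chi_m\|_{L^2}$. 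Since \eqref{eq:H1estimate} already bounds $\|\Psi_m\|_{H^1}$ uniformly in $t$ and $m$, this term is absorbed into $C\|\chi_m\|_{L^2}^2$; likewise $|\scalar{V_u\,\dot u\,\Psi_m}{\chi_m}|\le C|\dot u(t)|^2+C\|\chi_m\|_{L^2}^2$ by \eqref{eq:estimate-1}. Collecting everything gives $\frac{\dd}{\dd t}\|\chi_m\|_{L^2}^2 \le C\|\chi_m\|_{L^2}^2 + g(t)$ with $g:=C(|\dot u|^2+\|\partial_t F\|_{L^2}^2)\in L^1(0,T)$, and Gronwall's inequality yields $\operatorname*{ess\,sup}_{t}\|\chi_m(t)\|_{L^2}^2\le e^{CT}\bigl(\|\chi_m(0)\|_{L^2}^2+\|g\|_{L^1}\bigr)$.

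I expect the main obstacle to be the uniform control of the Galerkin initial datum $\|\chi_m(0)\|_{L^2}=\|\partial_t\Psi_m(0)\|_{L^2}$. Evaluating the Galerkin equation at $t=0$ and testing with $\chi_m(0)\in P_m$ gives $i\|\chi_m(0)\|_{L^2}^2 = B(\Psi_m(0),\chi_m(0);u(0)) + \alpha\scalar{V(\Psi_m(0))\Psi_m(0)}{\chi_m(0)} + \scalar{F(0)}{\chi_m(0)}$, so after taking imaginary parts it suffices to bound the right-hand side by $C\|\chi_m(0)\|_{L^2}$. The only problematic piece is $\scalar{\nabla\Psi_m(0)}{\nabla\chi_m(0)}$ inside $B$. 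Here I use that a basis which is simultaneously $L^2$-orthonormal and $H^1_0$-orthogonal satisfies the weak eigenrelation $\scalar{\nabla\Phi_k}{\nabla v}=\mu_k\scalar{\Phi_k}{v}$; together with $\Psi_m(0)=P_m\Psi_0$ and an integration by parts (licit because $\Psi_0\in H^2\cap H^1_0$) this gives $\scalar{\nabla\Psi_m(0)}{\nabla\chi_m(0)}=\scalar{P_m(-\Delta\Psi_0)}{\chi_m(0)}$ with $\|P_m(-\Delta\Psi_0)\|_{L^2}\le\|\Delta\Psi_0\|_{L^2}\le\|\Psi_0\|_{H^2}$. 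This is exactly the point where the hypothesis $\Psi_0\in H^2\cap H^1_0$ enters; moreover $F(0)$ is meaningful because $F\in H^1(0,T;L^2)\hookrightarrow C([0,T];L^2)$, and the potential terms are bounded as before. Hence $\|\chi_m(0)\|_{L^2}\le C(\|\Psi_0\|_{H^2}+\|F\|_{H^1(0,T;L^2)})$ uniformly in $m$.

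Passing to the weak-$*$ limit of $\{\Psi_m'\}$ in $L^\infty(0,T;L^2(\Omega))$ (and identifying the limit with $\Psi'$ using the convergences of the existence proof) gives $\Psi'\in L^\infty(0,T;L^2(\Omega))$. Finally, for the spatial regularity I would apply Lemma \ref{lem:EllipticRegularity} pointwise in $t$ to the elliptic identity $B(\Psi(t),\Phi;u(t))+\alpha\scalar{V(\Psi(t))\Psi(t)}{\Phi}=\scalar{A(t)}{\Phi}$ with $A(t)=-F(t)+i\Psi'(t)$, obtaining $\|\Psi(t)\|_{H^2}\le c\bigl(\|F(t)\|_{L^2}+\|\Psi'(t)\|_{L^2}+\|\Psi(t)\|_{L^2}\bigr)$. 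Since the constant $c$ depends only on $\|\Psi(t)\|_{H^1}$, which is uniformly bounded by \eqref{eq:H1estimate}, and all three norms on the right now lie in $L^\infty(0,T)$ (using $F\in H^1(0,T;L^2)\hookrightarrow L^\infty(0,T;L^2)$ and the just-proved bound on $\Psi'$), taking the essential supremum over $t$ yields $\Psi\in L^\infty(0,T;H^2(\Omega))$.
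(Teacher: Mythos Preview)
Your proposal is correct and follows the same overall architecture as the paper: differentiate the Galerkin identity in $t$, test with $\partial_t\Psi_m$, take imaginary parts, control $\|\partial_t\Psi_m(0)\|_{L^2}$ using $\Psi_0\in H^2\cap H^1_0$, pass to the limit, and finish with the pointwise elliptic estimate from Lemma~\ref{lem:EllipticRegularity}. Your treatment of the initial datum via the eigenrelation $\scalar{\nabla\Phi_k}{\nabla v}=\mu_k\scalar{\Phi_k}{v}$ is exactly the mechanism behind the paper's ``$\|\Psi_m(0)\|_{H^2}\le C\|\Psi_0\|_{H^2}$''; both rely on the Galerkin basis being the Dirichlet--Laplace eigenfunctions.

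The one genuine methodological difference lies in how the nonlinear contribution is handled after taking imaginary parts. The paper observes that for any \emph{real} multiplier $f$ one has $\scalar{f\Psi}{\partial_t\Psi}\in\R$, and uses this to conclude that $\scalar{V_u\,\partial_t(u\Psi_m)}{\partial_t\Psi_m}$ and $\scalar{\partial_t(V(\Psi_m)\Psi_m)}{\partial_t\Psi_m}$ drop out entirely from the imaginary part; what remains is only $\Im D(\partial_t\Psi_m,\partial_t\Psi_m)$ and $\Im\scalar{\partial_t F}{\partial_t\Psi_m}$, which are integrated in $t$ and closed using the $\|\partial_t\Psi_m\|_Y$ bound already provided by Theorem~\ref{thm:ImprovedRegularity}. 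You instead keep these terms and bound them quantitatively via the Lipschitz estimate $\|V(\Phi)\Phi-V(\Upsilon)\Upsilon\|_{L^2}\le C\|\Phi-\Upsilon\|_{L^2}$ (with constant depending on $\|\Psi_m\|_{H^1}$, controlled by \eqref{eq:H1estimate}), and then close with Gronwall rather than with the prior $Y$-bound. The paper's route is algebraically lighter when the reality identity is available; your route is slightly longer but more self-contained and does not need Theorem~\ref{thm:ImprovedRegularity} as an intermediate input for the time-derivative bound.
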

\begin{proof}
	Take a fixed $m\geq 1$. Differentiate \eqref{eq:KSweak_test} with respect to $t$, multiply this equation with $\pa_t d_m^k(t)$, sum over $k$, and integrate over $t$ to obtain
	\begin{equation}\label{eq:imp2:weakint}	\begin{split}
		\int_0^T i\scalar{\pa_t^2 \Psi_m}{\pa_t \Psi_m} \dd t&= \int_0^T B(\pa_t\Psi_m, \pa_t \Psi_m; u)+\scalar{V_u \pabl{}{t} \left( u\Psi_m\right)}{\pa_t\Psi_m}\\+\scalar{\pa_t F}{\pa_t\Psi_m}
		&+\alpha\scalar{V(\Psi_m)\pa_t \Psi_m+\pabl{V(\Psi_m)}{t}\Psi_m}{\pa_t\Psi_m} \dd t.
	\end{split}\end{equation}
	For the left-hand side, we have
	\begin{align}\label{eq:imp2:lhsnorm}
		i\scalar{\pa_t^2 \Psi_m}{\pa_t \Psi_m}=i\frac{1}{2}\abl{}{t} \|\pa_t\Psi_m\|_{L^2}^2.
	\end{align}
	We remark that for any $f(\Psi, x, t)\in \R$, we have
	\begin{align*}
		\scalar{f(\Psi, \cdot, t) \Psi(t)}{\pa_t \Psi(t)}=\scalar{f(\Psi, \cdot, t)}{\scalarC{\Psi}{\pa_t\Psi}}\\
		=\scalar{f(\Psi, \cdot, t)}{\frac{1}{2} \abl{}{t} \|\Psi(t)\|_\C^2} \in \R.
	\end{align*}
	Hence, using this result for the product terms in \eqref{eq:imp2:weakint}, we get
	\begin{align}\label{eq:imp2:realproducts}
		\scalar{V_u \pabl{}{t}\left(u\Psi_m(t)\right)}{\pa_t\Psi_m}\in \R && \text{and} &&
				\scalar{ \pabl{}{t}\left( V(\Psi_m)\Psi_m \right)}{\pa_t\Psi_m} \in \R.
	\end{align}
	
	Taking the imaginary part of \eqref{eq:imp2:weakint} and using \eqref{eq:imp2:lhsnorm} and \eqref{eq:imp2:realproducts} gives the following
	\begin{align*}
		\frac{1}{2}( 
		 \|\pa_t\Psi_m(t)\|_{L^2}^2-\|\pa_t\Psi_m(0)\|_{L^2}^2)&=\int_0^t (1-\alpha)\Im D(\pa_t\Psi_m, \pa_t\Psi_m)+\Im \scalar{\pa_t F}{\pa_t\Psi_m}.
	\end{align*}
		From this, using \eqref{eq:lem3:estD}, we obtain the following
		\begin{align*}
			\sup_{0\leq t\leq T}\|\pa_t \Psi_m(t)\|_{L^2}^2&\leq \|\pa_t\Psi_m(0)\|_{L^2}^2+2\int_0^T(1-\alpha)|\Im D(\pa_t\Psi_m, \pa_t\Psi_m)|\\
			&\quad+|\Im \scalar{\pa_t F}{\pa_t\Psi_m}| \dd t\\
					&\leq \|\pa_t\Psi_m(0)\|_{L^2}^2+2\int_0^T(1-\alpha)c_0 \|\pa_t\Psi_m\|_{L^2}^2+\|F\|_{L^2}^2+\|\pa_t \Psi_m\|_{L^2}^2 \dd t\\
					&\leq\|\pa_t\Psi_m(0)\|_{L^2}^2+2(c_0+1)\|\pa_t\Psi_m\|_Y^2 +2\|F\|_Y^2.
		\end{align*}
	
	By \eqref{thm:ImprovedRegularity:Estimate}, $\|\pa_t\Psi_m\|_Y$ is bounded by $F$ and $\Psi_0$. Hence, there exists a constant $c_6$ depending only on $T$, $\|\Psi_0\|_{L^2}$, $\|F\|_Y$ and $\|u\|_{H^1(0,T)}$, such that the following holds
	\begin{align}\label{eq:Imp2:supwithpsim0}
		\sup_{0\leq t\leq T}\|\pa_t \Psi_m(t)\|_{L^2}^2&\leq \|\pa_t\Psi_m(0)\|_{L^2}^2+ c_6.
	\end{align}
	
	To bound $\|\pa_t\Psi_m(0)\|_{L^2}^2$, we test \eqref{eq:KSweak_test} with $\pa_t\Psi_m(0)$ to obtain
	\begin{align*}
		i\scalar{\pa_t\Psi_m(0)}{\pa_t\Psi_m(0)}&=B(\Psi_m(0),\pa_t \Psi_m(0);u)+\scalar{V(\Psi_m(0)) \Psi_m(0)}{\pa_t \Psi_m(0)}\\
		&\quad +\scalar{F(0)}{\pa_t \Psi_m(0)},
	\end{align*}
	\begin{equation}\label{eq:improvedinfty:dtpsizero}
	\begin{split}
		\|\pa_t\Psi_m(0)\|_{L^2}^2 &\leq |B(\Psi_m(0),\pa_t \Psi_m(0);u)|+|\scalar{V(\Psi_m(0)) \Psi_m(0)}{\pa_t \Psi_m(0)}|\\
		&\quad +\|F(0)\|_{L^2}\|\pa_t\Psi_m(0)\|_{L^2}\\
		&\leq c_1' \|\Psi_m(0)\|_{H^2}\|\pa_t\Psi_m(0)\|_{L^2}+K'\|\Psi_m(0)\|_{L^2}\|\pa_t \Psi_m(0)\|_{L^2}\\
		&\quad +\|F(0)\|_{L^2}\|\pa_t\Psi_m(0)\|_{L^2}.
	\end{split}\end{equation}
	Here, we used \eqref{eq:bound_pot2} for the nonlinear potential and we use the modified proof of Lemma \ref{lem:bound-weak-form} by replacing $\scalar{\nabla \Psi}{\nabla \Phi}$ by $\scalar{\nabla^2 \Psi}{\Phi}$ using integration by parts.
	Dividing by $\|\pa_t\Psi_m(0)\|_{L^2}$ gives
	\begin{align*}
		\|\pa_t\Psi_m(0)\|_{L^2} &\leq c_1' \|\Psi_m(0)\|_{H^2}+K'\|\Psi_m(0)\|_{L^2}+\|F(0)\|_{L^2}\\
		&\leq (c_1'+K) \|\Psi_m(0)\|_{H^2}+\|F(0)\|_{L^2}.
	\end{align*}
	
	Furthermore, we have $\|\Psi_m(0)\|_{H^2}\leq C\|\Psi_0\|_{H^2}$; see, e.g., \cite[p. 363]{Evans2010}. Using this in \eqref{eq:improvedinfty:dtpsizero} gives the following
	\begin{align}\label{eq:Imp2:bla}
		\|\pa_t\Psi_m(0)\|_{L^2} &\leq (c_1'+1) C\|\Psi_0\|_{H^2}+\|F(0)\|_{L^2}.
	\end{align}
	
	Therefore, using \eqref{eq:Imp2:bla} in \eqref{eq:Imp2:supwithpsim0}, we obtain the following
	\begin{align*}
		\sup_{0\leq t \leq T} \|\pa_t \Psi_m(t)\|_{L^2}^2 \leq c_7 \left( \|\Psi_0\|_{H^2}^2+\|F(0)\|_{L^2}^2 \right)+c_6.
	\end{align*}
	
	Taking the limit $m\rightarrow \infty$, we find $\Psi'\in L^\infty(0,T; L^2(\Omega))$.
	
	Using this result in \eqref{eq:ellipticH2bound2}, we have that $\Psi\in L^\infty(0,T;H^2(\Omega))$ and  $\Psi$ is globally bounded by a constant $c_8$ depending on $T$, $\|\Psi_0\|_{L^2}$, $\|F\|_Y$ and $\|u\|_{H^1(0,T)}$ as follows
	\begin{equation}\label{eq:globalbound}
		\operatorname*{ess\,sup}_{0\leq t\leq T} \max_{x\in \Omega} |\Psi(x,t)|\leq c_8.
	\end{equation}
\end{proof}

\begin{remark}\label{remark:aposteriori}
By \eqref{eq:globalbound}, the solution of \eqref{eq:KSweak} is everywhere and for almost all times bounded by a constant.
As $V_x(\Psi)\Psi$ is a convex function of $\Psi$, it is hence Lipschitz continuous for solutions of \eqref{eq:KSweak}.
Assumption \ref{assumptions} (\ref{assumptionVxLipschitz}) is hence a reasonable assumption as it holds for all solutions.
\end{remark}

\section{Conclusion}
In this paper, the existence, uniqueness and improved regularity of solutions to the time-dependent Kohn-Sham (KS) equations and related equations were proved.
These results were proved considering a representative class of KS potentials. This work is instrumental for investigating optimal control problems governed by the KS equations.

\bibliographystyle{siam}
\bibliography{Literatur}

\end{document}